\crefname{hypothesis}{Hypothesis}{Hypotheses}
\crefname{fact}{Fact}{Facts}
\title{Adaptive Multilevel Newton: A Quadratically Convergent Optimization Method\thanks{Submitted to the editors \today.
}}
\author{Nick Tsipinakis\thanks{Faculty of Mathematics and Computer Science, UniDistance Suisse, Brig, Switzerland  (\email{nikolaos.tsipinakis@unidistance.ch}, \email{matthias.voigt@fernuni.ch}).}
\and Panos Parpas\thanks{Department of Computing, Imperial College, London, UK
  (\email{panos.parpas@imperial.ac.uk}).}
\and Matthias Voigt\footnotemark[2]
}
\newcommand{\R}{\mathbb{R}}
\newcommand{\N}{\mathbb{N}}
\newcommand{\e}{\mathrm{e}}
\renewcommand{\d}{\,\mathrm{d}}
\renewcommand{\epsilon}{\varepsilon}
\pgfplotsset{compat=1.18}
\newcommand{\transp}{\top}
\begin{document}

\maketitle

\begin{abstract}
Newton's method may exhibit slower convergence than vanilla Gradient Descent in its initial phase on strongly convex problems. Classical Newton-type multilevel methods mitigate this but, like Gradient Descent, achieve only linear convergence near the minimizer. We introduce an adaptive multilevel Newton-type method with a principled automatic switch to full Newton once its quadratic phase is reached. The local quadratic convergence for strongly convex functions with Lipschitz continuous Hessians and for self-concordant functions is established and confirmed empirically. Although per-iteration cost can exceed that of classical multilevel schemes, the method is efficient and consistently outperforms Newton's method, Gradient Descent, and the multilevel Newton method, indicating that second-order methods can outperform first-order methods even when Newton's method is initially slow. The promising empirical results open new avenues for designing reduced-cost second- and high-order methods with extremely fast convergence rates.

\end{abstract}

\begin{keywords}
Newton's method, multilevel methods, convex optimization, local quadratic convergence rates.
\end{keywords}

\begin{MSCcodes}
90C30, 90C25, 49M15, 65K05
\end{MSCcodes}

\section{Introduction}
Let $f : \R^n \to \R$ be a twice differentiable and strictly convex function. We also assume that it is bounded from below so that the minimizer exists.
Our goal is to solve the unconstrained optimization problem
\begin{equation} \label{eq: problem}
    x^* \coloneq \underset{x \in \R^n}{\operatorname{min}}  \ f(x),
\end{equation}

Among optimization methods applied for solving \eqref{eq: problem}, the pure Newton method has the most powerful theory, exhibiting quadratic convergence rate in a neighborhood of the minimizer.
Once Newton's method enters this neighborhood, convergence is extremely fast, doubling the number of the correct digits at each iteration. In practice this means that highly accurate solutions can be reached in only a few iterations, i.e., three or four are typically enough \cite{boyd2004convex, nesterov2018lectures}. Pure Newton's main drawback is related to the cost of forming the Newton direction and the storage requirements of the Hessian matrix. The per-iteration cost of Newton’s method scales cubically with the problem dimension $n$, which can significantly affect the overall runtime of the algorithm. Nonetheless, this cost is often justified by the method’s rapid convergence once the iterates are sufficiently close to the solution $x^*$. In particular, the local quadratic convergence of Newton’s method is especially advantageous in solving highly ill-conditioned problems where high-precision solutions are required—tasks that may be infeasible for methods limited to linear convergence rates.

In contrast, the global convergence of (damped) Newton's method is less efficient. Specifically, for strongly convex functions, the damped Newton method achieves only a linear convergence rate \cite{karimireddy2018global}. This rate matches that of Gradient Descent for the same class of functions, rendering the higher computational cost of forming and solving systems involving the Hessian matrix unjustified when compared to the substantially cheaper iterations of Gradient Descent. The issue of whether second-order methods are indeed advantageous at the initial stages of optimization was also raised in \cite{nesterov2008accelerating} for the purposes of the accelerated Cubic Newton method. Moreover, empirical observations further support this discrepancy, with Gradient Descent often exhibiting faster progress than second-order methods during the initial stages of the minimization process \cite{bottou2018optimization}. Consequently, due to its significantly lower per-iteration cost, Gradient Descent may substantially outperform Newton's method in terms of runtime when the iterates are far from the minimizer $x^*$. 

To achieve efficient performance in terms of overall runtime, one should then employ an inexpensive method, such as Gradient Descent, during the early stages of optimization, and to switch to Newton's method once the iterates are sufficiently close to the minimizer, where its quadratic rate guarantees the rapid convergence to high precision solutions. However, there are particular challenges when one attempts to combine an inexpensive method with Newton's method that are still open and we try to address in this paper. Our main question is the following: \medskip
\begin{center} 
    \emph{At what point, and according to what criterion, should one transition \\ from a computationally inexpensive method with ``slow'' rate to a more \\ expensive method exhibiting quadratic convergence?} \medskip
\end{center}

Naturally, the more computationally expensive method should be employed during the later stages of the optimization process. A common heuristic is to initiate a second-order method once $\| \nabla f(x) \| < \epsilon$ for some small $\epsilon > 0$. However, selecting an appropriate value for $\epsilon$ is nontrivial and problem-dependent. For instance, in problems that pose difficulties to optimzation methods, a small $\epsilon$ may never be achieved by the inexpensive method, potentially leading to convergence to a suboptimal solution. Conversely, if $\epsilon$ is selected too large, the method performs unnecessary expensive steps. This issue is illustrated in \cref{fig:generated_intro}.

\begin{figure}[bt]
    \centering
    \begin{subfigure}[b]{0.49\textwidth}
     \includegraphics{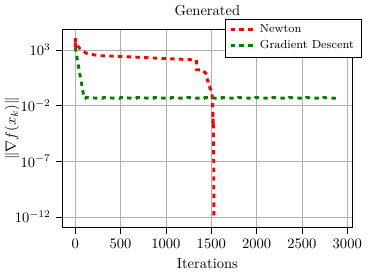}
     \end{subfigure}
    \caption{Minimization of the Poisson loss using a low-rank Generated dataset. Gradient Descent and Newton's methods achieve slow convergence rates in late and early stages of the minimization process, respectively. Gradient Descent significantly outperforms Newton's method in early stages. According to the figure, one must select $\epsilon \geq 10^{-1}$, otherwise the hybrid process may converge to a sub-optimal solution due to Gradient Descent's inefficiency near the minimizer. More details on this experiment can be found in \cref{sec: experiments}.}
    \label{fig:generated_intro}
\end{figure}

\subsection*{Adaptive Multilevel Newton Method}
To the best of our knowledge, a robust hybrid strategy that automatically and adaptively determines a meaningful transition point between methods in a principled manner is not available in the literature.
In this paper we propose an Adaptive Multilevel (AML) Newton method to address this gap in the literature. For the proposed method, we establish a local quadratic convergence rate for both strongly convex functions with Lipschitz continuous  Hessians and self-concordant functions, respectively. To achieve these rates, we introduce a new criterion whereby the proposed method \emph{adaptively} alternates between $m > 1$ subspaces (or \emph{coarse} levels) of $\R^n$, selecting at each iteration the smallest one that provides a theoretically justified and efficient decrease in the norm of the gradients. Specifically, the theory suggests that the \textsc{A\textsc{ML--Newton}} method automatically selects very small coarse levels (i.e., faster iterates) in early stages where the rate of convergence is typically slow, and larger ones near $x^*$ to obtain its quadratic rate. The local quadratic rates are easy to interpret and are comparable to that of Newton's method. As in Newton's method, once the method enters its quadratic phase, it will maintain this rate until the end of the minimization process. Furthermore, we establish a connection with random subspace Newton-type methods. In particular, for the Random Subspace Newton (RSN) method, we show that it achieves a local quadratic rate with high probability, based on random embeddings that satisfy the Johnson–Lindenstrauss (JL) lemma \cite{johnson1984extensions}.

We recall that local quadratic convergence rates are generally not applicable to classical multilevel (\textsc{ML--Newton}) or random subspace Newton-type methods. Without additional structural assumptions on problem \eqref{eq: problem}, one can typically expect only a local linear-quadratic rate \cite{gower2019rsn, hanzely2020stochastic, ho2019newton, tsipinakis2024multilevel}. In this work, we significantly improve the local convergence behavior of these methods through a simple modification: we alternate between subspaces by looking one step ahead in the minimization process, monitoring gradient progress within the relevant subspaces. This strategy ensures local quadratic convergence as the gradient is provably shown to decay more rapidly within the subspaces.

Next, we note that the trial step may introduce additional computational costs at each iteration. However, the actual per-iteration cost of the method remains an open question, as it strongly depends on the specific design of the subspaces. Despite this, we propose a simple yet effective strategy, under which no more than two coarse-level steps are expected on average per iteration. We conduct extensive experiments to evaluate the efficacy of our algorithm in real-world applications. The experiments confirm the theoretical findings, demonstrating local quadratic convergence in all tested cases. Notably, the method can enter the quadratic phase without requiring exact Newton steps, as long as sufficiently large subspaces are employed. Consequently, the behavior of the algorithm near the minimizer closely resembles that of Newton’s method.

Additionally, we compare our approach against Newton's method, Gradient Descent, the \textsc{ML--Newton} method and the subspace Newton method. The proposed method consistently outperforms its counterparts on problems where the structure poses difficulties for both Gradient Descent and Newton’s method, see for instance \cref{fig:generated_intro}. Furthermore, in settings where second-order information is essential and cannot be ignored, our method remains more efficient in terms of runtime due to the lower cost of its early iterations. Moreover, the empirical results between \textsc{AML--Newton} and RSN demonstrate that one can obtain a quadratically convergent second-order method whose per-iteration computational complexity in the first-phase is comparable to that of (already attractive) subspace Newton methods.

\subsection*{Related Work}

The multilevel framework, originally developed for solving linear elliptic partial differential equations (see \cite{borzi2009multigrid} for a review), was brought into optimization to accelerate the convergence of second-order methods such as trust-region and Newton methods. MG/OPT \cite{nash2000multigrid} was the first to extend the PDE multigrid framework to optimization. Building on this, \cite{gratton2008recursive} introduces a fully recursive trust-region scheme (RMTR) across discretization levels with global guarantees, and \cite{gross2009convergence} strengthens the theory with global convergence under milder assumptions on coarse models and transfer operators while allowing inexact subproblem solves. For constrained problems, \cite{ziems2011adaptive} develops an adaptive multilevel inexact SQP framework. In the unconstrained setting,~\cite{ho2019newton} provides convergence guarantees for a multilevel Newton method in the strongly convex case, while \cite{tsipinakis2024multilevel} extends these results to self-concordant functions with bounds that do not depend on unknown problem constants; \cite{tsipinakis2023lowrank} further analyzes a low-rank variant with theory for self-concordant objectives. Moreover, \cite{tsipinakis2025regularized} introduces a multilevel framework for regularized Newton methods in nonconvex optimization. This work is closest to ours: it shows that the multilevel trial steps can match the convergence guarantees of regularized Newton, but its practical efficiency has yet to be verified empirically.

Beyond their well-established theoretical foundations, multilevel methods also deliver efficient performance in modern, real-world applications. Multilevel training for deep learning includes globally convergent RMTR-style methods for ResNets \cite{kopanicakova2022globally}, stochastic MG/OPT for ResNets with reported speedups \cite{von2021training}, multilevel minimization tailored to depth hierarchies \cite{gaedke2021multilevel}, and a stochastic preconditioned gradient method for escaping saddle points and flat areas efficiently \cite{tsipinakis2023simba}. Furthermore \cite{gratton2024block} reframes multilevel as block-coordinate descent, and demonstrates gains for deep-learning PDE surrogates, while \cite{calandra2019approximation} proposes multilevel Levenberg–Marquardt training scheme that optimizes neural networks.
Layer-parallel (MGRIT) approaches expose parallelism across layers for ResNets \cite{gunther2020layer} and GRUs~\cite{moon2022parallel}, while multigrid-style training schedules accelerate video models \cite{wu2020multigrid}. Recent software demonstrates scalable layer-parallel training on MPI/GPU systems \cite{cyr2025torchbraid}, and multilevel-in-width schemes regularize and speed network training \cite{ponce2023multilevel}.

Subspace variants of the Newton method have been at the forefront of research over the past decade. Their main advantage over classical multilevel methods is that they do not require the original Hessian matrix during the optimization and thus they are well-suited for large-scale optimization. Although not directly comparable to the \textsc{A\textsc{ML--Newton}} method proposed in this paper, these approaches are worth mentioning because they can considerably accelerate the otherwise slow initial phase of Newton’s method.

An example of these variants is Randomized Block Coordinate Descent (RBCD) where at each iteration it updates a randomly selected subset of coordinates and therefore requires only the corresponding gradient components. In Newton--type block variants, one additionally uses the associated block of the Hessian, see \cite{nesterov2012efficiency, richtarik2014iteration, wright2015coordinate}. Under standard assumptions (smoothness, strong convexity), Block Coordinate Descent enjoys global convergence with linear rate; the constants are usually better for the Newton variant than for first-order RBCD because the step uses the curvature in the active block.

Random subspace methods generalize block coordinate methods by replacing coordinate-aligned blocks with arbitrary low-dimensional subspaces specified by a sketch matrix $R$. In practice, $R$ often originates from the Johnson--Lindenstrauss (JL) lemma \cite{johnson1984extensions}, which effectively reduces the problem's dimensionality without discarding essential second-order information. The RSN \cite{gower2019rsn} efficiently uses sketching to compute a Newton step restricted to a low-dimensional subspace. SDNA \cite{qu2016sdna} is a stochastic dual Newton method for empirical risk minimization that updates small blocks of dual variables using local Hessian information. The Randomized Block Cubic Newton method \cite{doikov2018randomized} performs cubic regularized Newton updates on randomly sampled variable blocks using block Hessians to capture curvature while keeping iterations cheap. A general randomized subspace framework to tackle large non-convex problems yielding cheaper iterations with global first-order convergence guarantees is proposed in \cite{cartis2022randomised}. The work in \cite{cartis2025random} applies the ideas of the general subspace framework \cite{cartis2022randomised} to Adaptive Regularization with Cubics (ARC) to achieve scalable cubic regularization steps in low-dimensional random subspaces, while preserving ARC’s optimal first- and second-order complexity guarantees.

\subsection*{Contributions}
In light of the related work, we now summarize our contributions.
\begin{enumerate}
    \item We develop \textsc{AML--Newton}, an adaptive multilevel Newton method that forms candidate steps on coarse levels to reduce the cost of full Newton updates. We prove local quadratic convergence rate for the class of strongly convex functions (\cref{thm: rate strong convexity}) and self-concordant functions (\cref{thm: quadratic rate self conc}).  We furter show connections to subspace Newton methods and, Using JL--type results, establish a probabilistic local quadratic convergence rate (\cref{cor: probabilistic rate strong convexity}). The quadratic rates have constants that are comparable to full Newton's method and easy to interpret. 
    \item This paper significantly advances the theory of multilevel methods such as RMTR \cite{gratton2008recursive} and \textsc{ML--Newton} \cite{ho2019newton,tsipinakis2024multilevel}. Our key difference is a new acceptance rule that predicts when a coarse step will be effective, providing a principled mechanism to switch automatically from a subspace method to full Newton steps precisely \emph{when} they are most efficient.
    \item Experiments on real-world problems show that \textsc{AML--Newton} behaves like \textsc{ML--Newton} and RSN far from the solution with negligible trial-step overhead, then automatically switches to full Newton and achieves quadratic convergence matching Newton's local rate. When Newton's initial phase is slow, \textsc{AML--Newton} runs faster than Newton, is typically faster than Gradient Descent while reaching high accuracy, and---unlike \textsc{ML--Newton}/RSN with very small subspaces---can exploit very small coarse models early and still converge to the minimizer.
\end{enumerate}

\section{Notation and Preliminaries}
We say that a symmetric matrix $A \in \R^{n \times n}$ is positive semi-definite if $x^\transp A x \geq 0 $ for all $x \in \R^n$ and we write $A \succeq 0$. Similarly, it is positive definite if $x^\transp A x > 0 $ for all $x \in \R^n \setminus \{0\}$ and we write $A \succ0$. We write $A \succeq B$ (resp. $A \succ B$) if $A-B$ is positive semi-definite (resp. positive definite). Moreover, $I_d$ denotes the $d \times d$ identity matrix. For a function $f : \R^n \rightarrow \R$, the gradient and Hessian at $x \in \R^n$ are denoted by $\nabla f(x)$ and $\nabla^2 f(x)$, respectively. In addition, the standard Euclidean norm is denoted by $\|x\|$. For a matrix $P \in \R^{m \times n}$ with rank $n<m$, we denote by $\sigma_1(P) \geq \sigma_2(P) \geq \cdots \geq \sigma_n(P)$ its singular values as $\sigma_i(P) \coloneq \sqrt{\lambda_i (P^\transp P)}$, where $\lambda_i (P^\transp P) > 0$ is the $i$-th largest eigenvalue of the positive definite matrix $P^\transp P$. Further, we have $\|P\| = \|P^\transp\| = \sigma_1 (P)$, for more details on matrix analysis see \cite{horn2012matrix}.

\subsection{Newton's Method for Strongly Convex and Self-Concordant Functions}
Pure Newton's method is well defined for problem \eqref{eq: problem}. It builds its iterates as follows
\begin{equation*}
    x_{k+1} = x_k - \nabla^2 f(x_k)^{-1} \nabla f(x_k).
\end{equation*}
Let $\mu, L >0$ such that $f$ is $\mu$-strongly convex with $L$-Lipschitz continuous Hessians. Then, pure Newton's steps achieve a local quadratic convergence \cite{boyd2004convex} with rate given by
\begin{equation} \label{ineq: newton's quadratic rate}
            \| \nabla f(x_{k+1}) \| \leq \frac{L}{2 \mu^2 } \|\nabla f(x_{k}) \|^2.
\end{equation}
Moreover, a function is called self-concordant with constant $M>0$ if for all $u \in \R^n$,
\begin{equation} \label{ineq: def self concordance}
    |D^3 f(x)[u,u,u]| \leq 2 M \left[u^\transp \nabla^2 f(x) u \right]^{3/2}.    
\end{equation}
Let us fix an $x \in \R^n$. Self-concordance gives rise to the following local norms induced by the Hessian at $x$,
\begin{equation} \label{eq: local norms definition}
    \| y \|_x = \left( y^\transp \nabla^2f(x) y \right)^{\frac{1}{2}} \quad \text{and}  \quad  \| y \|_x^* = \left( y^\transp \nabla^2f(x)^{-1} y \right)^{\frac{1}{2}}
\end{equation}
for any $y \in \R^n$. The Newton's method's local convergence for the class of self-concordant functions is measured based on the local norm of the gradient, or otherwise called the Newton decrement, which is defined as
\begin{align} 
    \lambda(x) & \coloneq \left[\nabla f(x)^\transp \nabla^2 f(x)^{-1} \nabla f(x)\right]^{1/2}. \label{eq: newton decrement}
\end{align}
It can be shown that pure Newton's method converges quadratically for the class of self-concordant functions. In particular, if $M \lambda(x_{k}) < 1$, then
\begin{equation} \label{ineq: newton quadratic phase self concor}
\lambda(x_{k+1}) \leq \frac{M \lambda(x_{k})^2}{\left( 1-M \lambda(x_{k})\right)^2}.
\end{equation}
If $M=1$ in \eqref{ineq: def self concordance}, the function $f$ is called standard self-concordant. This class is important because it guarantees convergence rates that are independent of unknown problem constants. For more details on the convergence rate of Newton's method under strong convexity and self-concordance we refer the reader to the books \cite{boyd2004convex, nesterov2018lectures}.

\subsection{The Classical Multilevel Framework for Newton's Method} \label{sec: classical multilevel methods}
The general multilevel approach for unconstrained optimization has been explored in various settings, see for instance \cite{gratton2008recursive, nash2000multigrid, MR2587737}. Subsequent works studied it as a two-level method, arising naturally from the structure of Newton's method when constructing its iterates \cite{ho2019newton, tsipinakis2024multilevel}. In this section, we unify these perspectives and present a general framework that incorporates a finite hierarchy of subspaces, tailored specifically to Newton-type methods. We refer to this approach as the Multilevel Newton (\textsc{ML--Newton}) method.

To apply the multilevel framework for problem \eqref{eq: problem}, we assume that we have access to $m \in \N$ models. The first $m-1$ models are defined over lower-dimensional subspaces of $\R^n$, whilst the model $m$ corresponds to the objective function $f$. Formally, we define functions
$f_{i} : \R^{n_i} \to \R$, $i = 1, \ldots, m$, where $n_{i} < n_{i+1}$ (note that $f_m \coloneq f$ and $n_m \coloneq n$). Following the standard multilevel terminology, we refer to the levels $i$, with $n_i < n$, as  \emph{coarse} levels. In this setting, multilevel methods construct a hierarchy of models associated with the coarse levels, where the main idea is to exploit the ``small'' models, instead of the more costly $f$, to generate sequences $(x_k)_{k \in \N}$ that converge to $x^*$. In what follows we describe how to generate such sequences.

Our first objective is to propagate the information toward the coarser levels. To access a model $f_i$, $1\leq i \leq m-1$, from the uppermost, or using the multilevel terminology, \emph{fine}, level and vice versa, we make use of the following linear mappings: 
$\tilde{R}_i : \R^{n_i} \to \R^{n_{i-1}}$ and $\tilde{P}_i : \R^{n_{i-1}} \to \R^{n_{i}}$, respectively.
We assume that these linear mappings are surjective and injective, respectively,  i.e., the corresponding linear operators have full rank. Furthermore, we assume that $\tilde{R}_i = \tilde{P}_i^\transp$. Thus, given $x \in \R^n$, we move to $f_i$ with input $x_i = \tilde{R}_{i + 1} \cdots \tilde{R}_{m-1} \tilde{R}_{m} x \in \R^{n_i}$, and back to $f$ with input $x_m = (\tilde{R}_{i + 1} \cdots \tilde{R}_{m-1} \tilde{R}_{m})^\transp x_i = \tilde{P}_{m} \tilde{P}_{m - 1} \cdots \tilde{P}_{i + 1} x_i \in \R^n$. In the multilevel literature, $\tilde{R}_i$ is known as the restriction operator, whereas $\tilde{P}_i$ is the prolongation operator. 

Given the above process of moving to the available levels, we generate sequences $(x_k)_{k \in \N}$ in a natural way, using the second-order Taylor approximation of $f_i$. When $i=m$, this is straight-forward, i.e., at an iteration $k$, one obtains the Newton step. However, if $1 \leq i \leq m-1$, the derivatives 
the low-dimensional model $f_i$ do not match those of the original model $f$. To address this discrepancy, multilevel methods enforce first- and second-order coherency between $f_i$ and $f$ by adding a correction term to $f_i$. In particular, let $k \in \N$ and assume that we are given $x_{i, k}$, a point at the coarse level $i$ at iteration $k$. We then proceed to the next (coarser) level with initial point $x_{i-1, 0} \coloneq \tilde{R}_{i} x_{i, k}$. We define $\psi_{i-1} : \R^{n_{i-1}} \to \R$ such that
\begin{align*} \label{eq:definition-coarse-model}
    \psi_{i-1}(x_{i-1}) \coloneq f_{i-1} (x_{i-1}) + \langle v_{{i-1}, k}, x_{i-1} - x_{{i-1}, 0} \rangle,
\end{align*}
where $v_{i-1,k} \coloneq \tilde{R}_i \nabla f_i(x_{i,k}) - \nabla f_{i-1}(x_{i-1, 0})$ and $2 \leq i \leq m$. Then, with the above modification on $f_{i-1}$ one now ensures that
\begin{equation} \label{eq:first-order-coherency}
        \nabla \psi_{i-1}(x_{i-1, 0}) = \tilde{R}_i \nabla f_i(x_{i,k}),
\end{equation}
i.e., the model $\psi_{i-1}$ is \emph{first-order coherent} w.r.t. the model $f_i$, see also \cite{nash2000multigrid}. To ensure its \emph{second-order coherency}, we define $f_{i-1}$ as 
\begin{equation*} \label{eq:def-f_{i-1}}
    f_{i-1} (x_{i-1}) = \frac{1}{2} \left\langle \tilde{R}_i \nabla^2 f(x_{i, k}) \tilde{P}_i (x_{i-1} - x_{i-1, 0}), x_{i-1} - x_{i-1, 0}\right\rangle,
\end{equation*}
and thus,
\begin{equation} \label{eq:second-order-coherency}
    \nabla^2 \psi_{i-1}(x_{i-1, 0}) = \tilde{R}_i \nabla^2 f_i(x_{i,k}) \tilde{P}_i.
\end{equation}
The choice of $f_{i-1}$ in \eqref{eq:first-order-coherency} is standard in multilevel methods, and is referred to as a \emph{Galerkin model}, originally introduced in \cite{MR2587737}. The work in \cite{MR2587737} and subsequent ones, e.g., \cite{ho2019newton, tsipinakis2024multilevel}, have extensively evaluated its performance, demonstrating its effectiveness in practical applications. Within the multilevel framework, the model $\psi_{i-1}$ is referred to as \emph{coarse}, since $n_i < n$, whereas $f_m$ (or equivalently $f$) is termed the \emph{fine model}.

Since the coherency properties have been established, we can form the second-order Taylor approximation $T_{i-1,k}$ of $\psi_{i-1}$ at $x_{i-1, 0}$, where $2 \leq i \leq m$. Denote $d_{i-1} \coloneq x_i - x_{i-1, 0}$. We have
\begin{equation} \label{eq:Taylor-approx}
\begin{split}
    T_{i-1,k}(x_{i-1, 0} + d_{i-1}) & =   \psi_{i-1}(x_{i-1, 0}) + \langle \nabla \psi_{i-1}(x_{i-1, 0}), d_{i-1} \rangle  \\  & \qquad + \frac{1}{2} \left\langle \nabla^2 \psi_{i-1}(x_k) d_{i-1}, d_{i-1} \right\rangle \\
    & \overset{\eqref{eq:first-order-coherency}, \eqref{eq:second-order-coherency}}{=} \psi_{i-1}(x_{i-1, 0}) + \left\langle \tilde{R}_i \nabla f_i(x_{i,k}), d_{i-1} \right\rangle 
    \\ & \qquad + \frac{1}{2} \left\langle \tilde{R}_i \nabla^2 f_i(x_{i,k}) \tilde{P}_i d_{i-1}, d_{i-1} \right\rangle.
    \end{split}
\end{equation}
Minimizing \eqref{eq:Taylor-approx} over $d_{i-1}$, we obtain the coarse direction at level $i-1$ as
\begin{equation*} 
    d_{i-1, k} = - \left( \tilde{R}_i \nabla^2 f_i(x_{i,k}) \tilde{P}_i \right)^{-1} \tilde{R}_i \nabla f_i(x_{i,k}),
\end{equation*}
and, as usual, we obtain the coarse direction at the next less coarse level by applying the prolongation operator, i.e.,
\begin{equation*} 
    d_{i, k} = \tilde{P}_i d_{i-1, k}.
\end{equation*}
To conclude this section, it remains to discuss a procedure for selecting an appropriate level, or coarse model, based on which the point $x_{k+1}$ will be generated. Ideally, one would select $\psi_1$ to obtain $d_{1, k}$, since $n_1 \leq n_i$ for all $i \geq 2 $ and thus, $d_{1, k}$ is more efficient to compute. However, coarse directions may not always be effective. This may occur when $\tilde{R}_i \nabla f_i(x_{i,k}) = 0$ while $\nabla f_i(x_{i,k}) \neq 0$. This is possible since it might be that $ \nabla f_i(x_{i,k}) \in \operatorname{null}\big({\tilde{R}_i}\big)$. Classical multilevel methods evaluate the effectiveness of a coarse model $\psi_{i-1}$ before computing the associated coarse direction. This assessment can be carried out efficiently by checking the following condition (see also \cite{gratton2008recursive}): let $\gamma \in\big(0, \min\big\{1, \min_i \big\|\tilde{R}_i \big\| \big\}\big)$ and $\epsilon \in (0, 1)$ --- then the coarse model $\psi_{i-1}$ is selected if
\begin{equation} \label{ineq:conditions-coarse-model-classical}
    \big\|\tilde{R}_i \nabla f_i(x_{i,k})\big\| \geq \gamma \| \nabla f_i(x_{i,k})\| \quad \text{and} \quad \big\|\tilde{R}_i \nabla f_i(x_{i,k})\big\| > \epsilon.
\end{equation}
The conditions in \eqref{ineq:conditions-coarse-model-classical} ensure that $\tilde{R}_i \nabla f_i(x_{i,k})$ is nonzero and that its norm is sufficiently close to the true gradient norm, scaled by a user-defined parameter $\gamma > 0$. In multilevel methods, $\gamma$ is a hyperparameter that controls the trade-off between computational efficiency and the quality of the coarse direction. Choosing a smaller $\gamma$ allows the selection of coarser models, which are computationally cheaper; however, the resulting coarse directions may be less informative, as $\tilde{R}_i \nabla f_i(x_{i,k})$ could be small in magnitude relative to $\nabla f_i(x_{i,k})$. Consequently, selecting an appropriate value for $\gamma$ requires careful tuning, as it is inherently problem-dependent. Moreover, the condition $\big\|\tilde{R}_i \nabla f_i(x_{i,k})\big\| > \epsilon$ ensures that the Newton step will always be performed once $\big\|\tilde{R}_i \nabla f_i(x_{i,k})\big\| \leq \epsilon$. Effectively, this condition is useful when high accuracy is required, which is possible to attain due to Newton's quadratic rate.

An efficient implementation of the multilevel strategy proceeds as follows: starting from the coarsest level $i = 1$, one moves sequentially to finer levels until the condition \eqref{ineq:conditions-coarse-model-classical} is met. If a level $1 \leq i \leq m-1$ is selected, we follow the procedure outlined earlier: we compute the initial point $x_{i-1,0} = \tilde{R}_{i} \tilde{R}_{i+1} \cdots \tilde{R}_{m} x_{m,k}$, construct the model $\psi_{i-1}$, and evaluate the corresponding coarse direction $d_{i-1,k}$. We then propagate $d_{i-1,k}$ back to the original level
\begin{equation*}
    d_{m,k} = \tilde{P}_{m} \cdots \tilde{P}_{i+1} \tilde{P}_{i} d_{i-1 ,k}
\end{equation*}
and compute 
\begin{equation*}
    x_{k+1} = x_k + d_{m,k}.
\end{equation*}
On the other hand, if \eqref{ineq:conditions-coarse-model-classical} is not satisfied for $1 \leq i \leq m-1$, we then compute $x_{k+1}$ from the Newton method. 

\section{An Efficient Newton-type Multilevel Method} \label{sec: new multilevel}

The multilevel Newton method presented in the previous section chooses the coarse level for constructing the associated coarse direction according to \eqref{ineq:conditions-coarse-model-classical}. Although, convergence to the unique minimizer can now be established due to \eqref{ineq:conditions-coarse-model-classical}, the local rates for multilevel methods, whether Newton-based or otherwise, are generally linear-quadratic. Indeed, without additional assumptions, linear-quadratic rates are the best one can typically expect. 

In this section we introduce a simple, yet effective, idea to significantly enhance the local convergence behavior of the \textsc{ML--Newton} methods. The key insight is to compute the next iterate by looking one step ahead. In what follows, we present how the new multilevel algorithm generates the sequence $(x_k)_{k \in \N}$. We also present a modified multilevel framework for the Newton-based method. Compared to the classical framework in \cref{sec: classical multilevel methods}, this framework is more general, offering more flexibility for practical applications.

In the classical multilevel framework in \cref{sec: classical multilevel methods}, reaching a coarse level $i$, $1 \leq i \leq m-1$, requires passing through every level in the hierarchy. Our approach here allows direct access to any chosen coarse level. This flexibility enables the algorithm to exploit information from a specific coarse space immediately, potentially reducing unnecessary intermediate computations. Unless stated otherwise, we follow the notation introduced in \cref{sec: classical multilevel methods}.

Let us fix some $1\leq i \leq m-1$. As previously, our first objective is to propagate the information from the original toward the coarse level $i$ and vice versa. In contrast to \cref{sec: classical multilevel methods}, we give the following definition for the linear mappings: $R_i : \R^{n} \to \R^{n_i}$ and $P_i : \R^{n_i} \to \R^{n}$, respectively. Similarly, we assume that (i) the corresponding linear operators have rank $n_i$, (ii) $n_{i} < n_{i+1}$, and (iii) $R_i = P_i^\transp$.  Therefore, we now obtain $ x_i = R_i x$ and $x_m = P_i x_i$, where the subscript $i$ in $x_i$ is used since $x_i \in \R^{n_i}$. The coarse model $\psi_i : \R^{n_i} \to \R$ is defined in a similar fashion. Let $k \in \N$ and denote $x_{i, 0} \coloneq R_i x_k$, i.e., the initial point at level $i$. Then,
\begin{align*} \label{eq:definition-coarse-model modified}
    \psi_{i}(x_{i}) \coloneq f_{i} (x_{i}) + \langle u_{{i}, k}, x_{i} - x_{{i}, 0} \rangle,
\end{align*}
where $u_{i,k} \coloneq R_i \nabla f(x_{k}) - \nabla f_{i}(x_{i, 0})$. Defining $f_i$ as
\begin{equation*} \label{eq:def-f_i modified}
    f_{i} (x_{i}) = \frac{1}{2} \langle R_i \nabla^2 f(x_{k}) P_i (x_{i} - x_{i, 0}), x_{i} - x_{i, 0}\rangle,
\end{equation*}
the coarse model is both first- and second-order coherent w.r.t. $f$.
Moreover, from the second-order Taylor approximation of $\psi_i$ at $x_{i, 0}$ and setting $d_{i} \coloneq x_{i} - x_{i, 0}$, we obtain the second-order Taylor polynomial $T_{i,k}$ as 
\begin{equation} \label{eq:Taylor-approx modified}
\begin{split}
    T_{i,k}(x_{i, 0} + d_{i}) & = \psi_{i}(x_{i, 0}) + \langle R_i \nabla f(x_{k}), d_{i} \rangle + \frac{1}{2} \langle R_i \nabla^2 f(x_{k}) P_i d_{i}, d_{i} \rangle.
    \end{split}
\end{equation}
Minimizing \eqref{eq:Taylor-approx modified} over $d_i$ we get the coarse direction $d_{i, k} \in \R^{n_i}$
\begin{equation} \label{eq: coarse direction at i}
    d_{i,k} = - \left(R_i \nabla^2 f(x_k) P_i \right)^{-1} R_i \nabla f(x_k),
\end{equation}
which is then prolongated to obtain the coarse direction $d_{m.k} \in \R^n$ as 
\begin{equation} \label{eq: coarse direction at m}
    d_{m, k} = P_i d_{i, k} = - P_i \left(R_i \nabla^2 f(x_k) P_i \right)^{-1} R_i \nabla f(x_k).
\end{equation}
We note that the multilevel setting in this section is more general than the classical one of \cref{sec: classical multilevel methods}. One can retrieve the classical multilevel framework by setting $R_i \coloneq \tilde{R}_{i + 1} \cdots \tilde{R}_{m-1} \tilde{R}_{m}$. 

The derivation of the coarse step in \eqref{eq: coarse direction at m} is straight-forward, and naturally arises from the classical framework in \cref{sec: classical multilevel methods}. The main novelty in this section are the new conditions we introduce below for selecting the coarse model, where, as we will see later, they yield more effective coarse directions. Let $\sigma \in (0, \min\{1, \min_i \|R_i\| \})$. Then, the coarse direction \eqref{eq: coarse direction at m} arising from the level $i$ is selected if 
\begin{equation} \label{ineq:conditions-coarse-model-new}
    \|R_i \nabla f(x_{k+1})\| \geq \sigma \| \nabla f(x_{k+1})\|.
\end{equation}
In view of the new condition, we notice that the main difference between \eqref{ineq:conditions-coarse-model-new} and \eqref{ineq:conditions-coarse-model-classical} is that we now require $R_i \nabla f(x_{k+1})$ to be close to the true gradient $\nabla f(x_{k+1})$. In particular, at some iteration $k$, we choose the coarse model $\psi_i$ which yields $d_{m,k}$ if, at iteration $k+1$, $R_i \nabla f(x_{k+1})$ is close compared to $\nabla f(x_{k+1})$. In other words, we seek coarse directions whose benefit is realized in the subsequent iteration rather than the current one. We term the resulting multilevel Newton-type method \emph{adaptive} because it dynamically selects the coarse levels to accelerate convergence. Surprisingly enough, we will see that one can now prove a local quadratic convergence rate for the new multilevel algorithm.

As a result, if for our $i$, relationship \eqref{ineq:conditions-coarse-model-new} holds, we compute
\begin{equation} \label{eq: coarse step}
    x_{k+1} = x_k + d_{m, k},
\end{equation}
otherwise we check condition \cref{ineq:conditions-coarse-model-new} for $i+1$.
If there exists no $1 \leq i \leq m-1$ such that \eqref{ineq:conditions-coarse-model-new} is satisfied, then we set $i=m$ and $R_m = I_n$ and perform pure Newton steps, i.e.,
\begin{equation} \label{eq: newton step}
    x_{k+1} = x_k + d_{k},
\end{equation}
where $d_k \coloneq - \nabla^2 f(x_k)^{-1} \nabla f(x_k)$ is the Newton direction.

\begin{algorithm}[t]
\caption{Adaptive Multilevel Newton method}
\begin{algorithmic}[1]
\REQUIRE initial value $x_{0} \in \R^n$, restriction operators $R_{i, k}$, damping parameters $\alpha_k, \sigma \in (0,1]$
\ENSURE sequence $(x_k)_{k \in \N}$ converging to $x^*$
\FOR{$k = 0, 1, \ldots$}
\STATE Compute \begin{equation*} \label{step: find}
i^*:=\min\left\{\,1\le i\le m:\;
\begin{aligned}
\|R_{i,k}\nabla f(x_k)\| &\ge \sigma\|\nabla f(x_k)\|, \\
\|R_{i,k}\nabla f(x_{k+1})\| &\ge \sigma\|\nabla f(x_{k+1})\|
\end{aligned}
\right\}.
\end{equation*}

\IF {$1 \leq i^* \leq m-1$}
\STATE Set $ x_{k+1} := x_{k} - \alpha_k P_{i^*, k} \left(R_{i^*, k} \nabla^2 f(x_k) P_{i^*, k} \right)^{-1} R_{i^*, k} \nabla f(x_k)$.
\ELSE
\STATE Set $ x_{k+1} := x_{k} - \alpha_k \nabla^2 f(x_k)^{-1} \nabla f(x_k)$.
\ENDIF
\ENDFOR
\end{algorithmic}
\label{alg: multilevel}
\end{algorithm}

Furthermore, one could define iteration-dependent restriction and prolongation operators, i.e., $R_{i,k}$ and $P_{i,k}$, respectively, and derive the algorithm's coarse steps without modifying the above analysis. A generic Adaptive Multilevel Newton method (\textsc{A\textsc{ML--Newton}}) with iteration-dependent operators {and damping parameter $\alpha_k$}
is presented in \cref{alg: multilevel}. 
Note that, when applying iteration-dependent operators, \cref{alg: multilevel} must also check the condition
$\|R_{i,k}\nabla f(x_k)\|\ge \sigma\|\nabla f(x_k)\|$ at each iteration. We show below that, in the
classical multilevel setting with fixed operators, this condition follows directly from
\cref{ineq:conditions-coarse-model-new} and can therefore be removed from step \ref{step: find}.
There are several ways to compute the damping parameter at each iteration.
The Armijo rule is often preferred over other strategies in second-order methods because it is robust and automatically adapts the step size, accepting the full Newton step when it is safe.

A process for computing $i^*$ in step \ref{step: find} in \cref{alg: multilevel} will be discussed in \cref{sec: implementation details}. In any case though, one has to compute a trial step to evaluate \eqref{ineq:conditions-coarse-model-new}. This may come with the cost of constructing extra coarse directions at each iteration with cost $\mathcal{O}\big(n_i^3\big)$, and thus, per iteration, \cref{alg: multilevel} is more expensive than the \textsc{ML--Newton} method. 
The total complexity of \cref{alg: multilevel} is difficult to measure since it is very much dependent on the design of coarse models and the choice of $\sigma$. In the worst scenario though the method will compute $m-1$ coarse steps plus a Newton step. In \cref{sec: implementation details} we will discuss how to design coarse models such that in practice at most two coarse steps with $n_i \ll n$ are computed on average, at each iteration.  

We also note that the idea of taking a trial step is not new in optimization. It has been a key component in trust-region methods \cite{conn2000trust}. Nevertheless, although the main idea looks the same, it is in fact different from that of \cref{alg: multilevel}. In trust-region methods, one looks at the ratio of the actual vs. predicted decrease of the objective function to decide whether a candidate step is accepted. Then, the trust-region radius is adjusted accordingly. Here, instead, we look at the ratio of gradient norms to, in essence, determine whether a level is accepted. As we will see later, the norm of the reduced gradient decreases faster than that of the true gradient, thus our goal is to find a level that imposes a similar decrease to the latter norm.

\section{Convergence Analysis} \label{sec: analysis}

{We start by discussing the global convergence of \cref{alg: multilevel}. To show global convergence we consider two cases: (i) the restriction operator is fixed at each iteration, i.e, $R_{i,k} \equiv R_{i}$, (ii) it varies as in \cref{alg: multilevel}}.

Let $R_i$, $1\leq i\leq m-1$, be fixed and assume for simplicity that the coarse step is always accepted (if no coarse model accepted at an iteration $k$, then the method progresses as Newton's method whose global convergence is already established). Then by inequality \cref{ineq:conditions-coarse-model-new} there exists a $j \in \{1,\ldots,m-1\}$ with 
\begin{equation*}
    \|R_j \nabla f(x_{k})\| \geq \sigma \| \nabla f(x_{k})\|. 
\end{equation*}
Thus, the method takes effective steps at each iteration similar to classical Newton multilevel method (see condition \eqref{ineq:conditions-coarse-model-classical}). 
Then, assuming that the function is strongly convex with Lipschitz continuous gradients and Hessians the required global result follows from \cite{ho2019newton}. It can be shown that 
\begin{equation} \label{ineq: global convergence}
    f(x_{k+1}) \leq f(x_k) - \nu_k,
\end{equation}
where $\nu_k \coloneq \sigma C \|\nabla f(x_k)\|^2$ for some $C>0$ and 
for all $k \in \N$ and $x_0 \in \R^n$. Specifically,
the authors show that \eqref{ineq: global convergence} remains true even when the damping parameter $\alpha_k$ arises from the Armijo rule \cite[Lemma 3.1]{ho2019newton} and that the Armijo rule accepts the unit step \cite[Lemma 3.8]{ho2019newton}. Analogously, we can show similar global results for \cref{alg: multilevel} when assuming strictly convex self-concordant functions using Lemmas 3.1 and 3.4 proved in \cite{tsipinakis2024multilevel}. In this case $\nu_k$ depends on the Newton decrement instead of norm of the gradient (see \cite{tsipinakis2024multilevel}).

If \cref{alg: multilevel} is applied with iteration-dependent operators, then \cref{ineq:conditions-coarse-model-new} does not necessarily imply $    \|R_{i, k} \nabla f(x_{k})\| \geq \sigma \| \nabla f(x_{k})\|$ anymore, as $\|R_{i, k} \nabla f(x_{k})\|$ can be arbitrarily small compared to $\|\nabla f(x_{k})\|$. Thus, in order to establish the global convergence of the algorithm we need to check $    \|R_{i, k} \nabla f(x_{k})\| \geq \sigma \| \nabla f(x_{k})\| $ at each iteration alongside \cref{ineq:conditions-coarse-model-new}. Then, as previously, for both strongly convex and self-concordant functions an inequality such as \cref{ineq: global convergence} can be shown from \cite{ho2019newton} and \cite{tsipinakis2024multilevel}. 

The above discussion shows that the global convergence behavior of the multilevel Newton methods for strictly convex functions aligns with that of the classical Newton method, (similar reductions in the value of the objective function have been shown in \cite{nesterov2018lectures, boyd2004convex}).
However, their local convergence analyses remain weaker compared to the well-established local behavior of the classical Newton method. In this section, we fill this gap by presenting a local convergence analysis of Algorithm~\ref{alg: multilevel}, establishing its quadratic rate under two settings: 
\begin{enumerate}[(i)]
  \item strongly convex functions with Lipschitz-continuous Hessians;
  \item strictly convex self-concordant functions.
\end{enumerate}
In this section we simplify the notation as follows: the gradient and the Hessian at iteration $k$ are denoted $\nabla f_k \coloneq \nabla f(x_k)$ and $\nabla^2 f_k \coloneq \nabla^2 f(x_k)$, respectively.  
We will perform an analysis of \cref{alg: multilevel} in more generic form that allows for $R_i$ to vary at each iteration. The operator $R_{i, k}$ denotes the restriction operator at level $i$ and iteration $k$. Therefore, for the purposes of this section, all quantities of \cref{sec: new multilevel} involving $R_i$ should be replaced with $R_{i, k}$.  We moreover define
\begin{equation} \label{eq: def G_k}
    G_k \coloneq \int_0^1 \nabla^2 f(x_k + t(x_{k+1} - x_k)) \d t,
\end{equation}
where $x_k$ arises from \cref{alg: multilevel}. 
For the remainder of this section, to derive the local convergence rates of \cref{alg: multilevel}, we set $\alpha_k \equiv 1$ By the discussion above, if the current iterate is sufficiently close to the minimizer, the Armijo rule will always accept the unit step.

\subsection{Strongly Convex Functions} \label{sec: strong convexity analysis}
In this section we analyze \cref{alg: multilevel} for strongly convex functions with Lipschitz Hessians \cite{boyd2004convex, nesterov2018lectures}. That is, we assume there exist constants $\mu, L > 0$ such that
\begin{equation} \label{ineq: strong convexity}
    \nabla^2 f(x) \succeq \mu I_n \quad \text{for all } x \in \R^n,
\end{equation}
and
\begin{equation} \label{ineq: Lipschitz hessian}
    \big\| \nabla^2 f(x) - \nabla^2 f(y)\big\| \leq L \| x-y \| \quad \text{for all } x, y \in \R^n.
\end{equation}

Let $\sigma_1(P_{i,k}) \geq \cdots \geq \sigma_{n_i} (P_{i,k}) >0$ be the singular values of $P_{i, k}$. Then, by the properties of the Euclidean norm, we have
\begin{equation} \label{eq: def norm R_i,k}
    \| R_{i, k}\| = \| P_{i, k}\| = \sigma_1(P_{i,k}).
\end{equation}
We further assume that the sequence $(P_{i, k})_{k \in \N}$ is bounded in the sense that there exist $0< \omega, \xi < \infty$ such that
\begin{equation} \label{eq: def omega and xi}
    \omega \coloneq \inf_{i, k} \{ \sigma_{n_i} (P_{i,k}) \} \quad \text{and} \quad \xi \coloneq \sup_{i, k} \{ \sigma_1(P_{i,k})\}.
\end{equation}
We will see later that there are efficient ways to generate $(P_{i, k})_{k \in \N}$ to satisfy \eqref{eq: def omega and xi} for such $\omega$ and $\xi$. 
Furthermore, from \eqref{eq: def norm R_i,k} and \eqref{eq: def omega and xi} we have
\begin{equation} \label{ineq: norm R_i, omega and xi}
    \| R_{i, k}\| \leq \xi \quad \text{and} \quad \omega \leq \xi.
\end{equation}
The following result plays a key role in establishing the local quadratic convergence of \cref{alg: multilevel}.
\begin{lemma} \label{lemma: strongly convex rate subspace}
If the point $x_{k+1}$ is obtained by the coarse model as in \eqref{eq: coarse step}, then 
    \begin{equation} \label{ineq: strongly convex rate subspace}
        \|R_{i,k} \nabla f_{k+1} \| \leq \frac{L \xi^5}{2 \mu^2 \omega^4} \|\nabla f_{k} \|^2
    \end{equation}
    for all $i \in \{1,2, \ldots, m-1 \}$ and $k \in \N$.
\end{lemma}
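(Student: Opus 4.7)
The plan is to mimic the classical quadratic convergence proof for Newton's method, but work in the restricted space via $R_{i,k}$. The central identity is that when $d_{m,k}$ is the (prolongated) solution of the reduced Newton system, the quantity $R_{i,k}\nabla f_{k+1}$ admits a clean expression in which the ``subspace residual'' cancels, leaving only the curvature mismatch between $G_k$ and $\nabla^2 f_k$.

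First I would use the fundamental theorem of calculus together with the definition of $G_k$ in \eqref{eq: def G_k} to write
\begin{equation*}
\nabla f_{k+1} - \nabla f_k = G_k (x_{k+1} - x_k) = G_k d_{m,k}.
\end{equation*}
Applying $R_{i,k}$ from the left and inserting $\pm R_{i,k} \nabla^2 f_k d_{m,k}$ to split $G_k$ into $(G_k - \nabla^2 f_k) + \nabla^2 f_k$, I would then substitute the explicit form $d_{m,k} = -P_{i,k}\bigl(R_{i,k}\nabla^2 f_k P_{i,k}\bigr)^{-1} R_{i,k}\nabla f_k$ from \eqref{eq: coarse direction at m}. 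The middle term collapses to $-R_{i,k}\nabla f_k$, exactly canceling $R_{i,k}\nabla f_k$, and one is left with the clean identity
\begin{equation*}
R_{i,k} \nabla f_{k+1} = R_{i,k}(G_k - \nabla^2 f_k)\, d_{m,k}.
\end{equation*}
This cancellation is the conceptual heart of the proof and is the direct analogue of the cancellation used to derive \eqref{ineq: newton's quadratic rate} for full Newton.

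Next I would take norms and use the standard Lipschitz-Hessian bound $\|G_k - \nabla^2 f_k\| \le \tfrac{L}{2}\|x_{k+1}-x_k\| = \tfrac{L}{2}\|d_{m,k}\|$, which follows by pulling the norm inside the integral defining $G_k$ and applying \eqref{ineq: Lipschitz hessian}. Combined with $\|R_{i,k}\| \le \xi$ from \eqref{ineq: norm R_i, omega and xi}, this gives
\begin{equation*}
\|R_{i,k} \nabla f_{k+1}\| \le \frac{L\,\xi}{2}\, \|d_{m,k}\|^2.
\end{equation*}

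The remaining task, and where the constants $\omega,\xi$ really enter, is to bound $\|d_{m,k}\|$ by $\|\nabla f_k\|$. I would bound $\|d_{m,k}\| \le \|P_{i,k}\|\cdot \|(R_{i,k}\nabla^2 f_k P_{i,k})^{-1}\|\cdot \|R_{i,k}\|\cdot \|\nabla f_k\| \le \xi^2 \, \|(R_{i,k}\nabla^2 f_k P_{i,k})^{-1}\|\cdot\|\nabla f_k\|$. The main obstacle is controlling $\|(R_{i,k}\nabla^2 f_k P_{i,k})^{-1}\|$ from above; for this I would use strong convexity \eqref{ineq: strong convexity} in the quadratic form, since for any $v \in \R^{n_i}$,
\begin{equation*}
v^\transp R_{i,k}\nabla^2 f_k P_{i,k} v = (P_{i,k} v)^\transp \nabla^2 f_k (P_{i,k} v) \ge \mu \, \|P_{i,k} v\|^2 \ge \mu\, \sigma_{n_i}(P_{i,k})^2\,\|v\|^2 \ge \mu\omega^2 \|v\|^2,
\end{equation*}
so $R_{i,k}\nabla^2 f_k P_{i,k} \succeq \mu\omega^2 I_{n_i}$ by \eqref{eq: def omega and xi}, and therefore $\|(R_{i,k}\nabla^2 f_k P_{i,k})^{-1}\| \le 1/(\mu\omega^2)$. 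Chaining these estimates yields $\|d_{m,k}\| \le \tfrac{\xi^2}{\mu\omega^2}\|\nabla f_k\|$, and substituting into the previous display produces exactly the constant $\frac{L\xi^5}{2\mu^2\omega^4}$ in \eqref{ineq: strongly convex rate subspace}.
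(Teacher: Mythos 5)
Your proof is correct and follows essentially the same route as the paper: Taylor identity, cancellation of the reduced gradient via the subspace Newton system, the Lipschitz bound $\|G_k - \nabla^2 f_k\| \le \tfrac{L}{2}\|d_{m,k}\|$, and the strong-convexity lower bound $R_{i,k}\nabla^2 f_k P_{i,k} \succeq \mu\omega^2 I_{n_i}$ to control $\|d_{m,k}\|$. The only cosmetic difference is that the paper first bounds $\|d_{m,k}\| \le \tfrac{\xi}{\mu\omega^2}\|R_{i,k}\nabla f_k\|$ and retains the intermediate estimate \eqref{ineq: upper step before} in terms of the \emph{reduced} gradient (which it later reuses in \cref{remark: quadratic rate theorem} and \cref{thm: rate interpretation}) before applying $\|R_{i,k}\nabla f_k\| \le \xi\|\nabla f_k\|$, whereas you pass to $\|\nabla f_k\|$ one step earlier; the final constant is identical.
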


\begin{proof}
Let us fix an $i \in \{1,2, \ldots, m-1 \}$ and $k \in \N$. From Taylor's identity $\nabla f_{k+1} = \nabla f_{k} + G_k (x_{k+1} - x_k)$ we have
    \begin{align}
        R_{i, k} \nabla f_{k+1} & \overset{\eqref{eq: coarse step}}{=} R_{i, k} \nabla f_{k} + R_{i, k} G_k d_{m,k} \notag \\
        & \overset{\eqref{eq: coarse direction at i}}{=} - R_{i,k} \nabla^2 f_{k} P_{i, k} d_{i,k} + R_{i, k} G_k d_{m,k} \notag \\
        & \overset{\eqref{eq: coarse direction at m}}{=} - R_{i,k} \nabla^2 f_{k} d_{m,k} + R_{i, k} G_k d_{m,k} \notag \\
        & = -R_{i,k} \left( \nabla^2 f_{k} - G_k \right) d_{m,k}. \label{eq: identity from taylor formula}
    \end{align}
    Moreover, using the Singular Value Decomposition of $P_{i,k}$, and since $P_{i,k}$ has full rank, we have that
    \begin{equation} \label{ineq: bounds on RP}
        R_{i, k} P_{i, k} \succeq \sigma_{n_i}^2 I_n \overset{\eqref{eq: def omega and xi}}{\succeq} \omega^2 I_n \implies \left(R_{i, k} P_{i, k}\right)^{-1} \preceq \frac{1}{\omega^2}I_n.
    \end{equation}
    Furthermore, 
    \begin{equation} \label{ineq: integral lipschitz}
        \begin{split}
            \big\| \nabla^2 f_{k} - G_k \big\| & \leq \int_0^1 \big\| \nabla^2 f_{k} - \nabla^2 f(x_k + td_{m, k}) \big\| \d t \\
            & \overset{\eqref{ineq: Lipschitz hessian}}{\leq} \int_0^1 t  \d t \ L \| d_{m, k}\| = \frac{L}{2} \| d_{m, k}\|.
        \end{split}
    \end{equation}
    Let us now estimate an upper bound on $\|d_{m,k}\|$. First note that by strong convexity we obtain
    \begin{align} 
            \nabla^2 f(x) \succeq \mu I_n & \implies R_{i, k}\nabla^2 f(x) P_{i, k} \succeq \mu R_{i, k}P_{i, k}  \notag \\ 
            & \implies \left( R_{i, k}\nabla^2 f_k P_{i, k}\right)^{-1} \preceq \frac{1}{\mu} \left( R_{i, k}P_{i, k}\right)^{-1} \overset{\eqref{ineq: bounds on RP}}{\preceq} \frac{1}{\omega^2 \mu} I_n, \label{ineq: upper-bound on RnablafP}
    \end{align}
    and thus
    \begin{align}
            \| d_{m, k}\| & \overset{\eqref{eq: coarse direction at m}}{=} \left\| P_{i, k} \left( R_{i, k}\nabla^2 f_k P_{i, k}\right)^{-1} R_{i, k}\nabla f_k \right\| \notag \\ 
            & \overset{\eqref{ineq: norm R_i, omega and xi}}{\leq} \xi \left\|\left( R_{i, k}\nabla^2 f_k P_{i, k}\right)^{-1} R_{i, k}\nabla f_k \right\| \notag \\
            & \overset{\eqref{ineq: upper-bound on RnablafP}}{\leq} \frac{\xi}{\mu \omega^2} \| R_{i, k}\nabla f_k \|. \label{ineq: upper bound on d_m} 
    \end{align}
    Now we have
    \begin{align}
        \| R_{i, k} \nabla f_{k+1} \| & \overset{\eqref{eq: identity from taylor formula}}{=} \big\| R_{i,k} \left( \nabla^2 f_{k} - G_k \right) d_{m,k}\big\| \notag \\
        & \leq \| R_{i,k} \| \big\| \nabla^2 f_{k} - G_k \big\| \| d_{m,k} \| \notag \\
        & \overset{\eqref{ineq: norm R_i, omega and xi}}{\leq} \xi \| \nabla^2 f_{k} - G_k \| \| d_{m,k} \| \notag \\ 
        & \overset{\eqref{ineq: integral lipschitz}}{\leq} \frac{L \xi}{2} \| d_{m, k}\|^2 \notag \\
        & \overset{\eqref{ineq: upper bound on d_m}}{\leq} \frac{L \xi^3}{2 \mu^2 \omega^4} \| R_{i,k} \nabla f_k \|^2  \label{ineq: upper step before} \\
        & \overset{\eqref{ineq: norm R_i, omega and xi}}{\leq} \frac{L \xi^5}{2 \mu^2 \omega^4} \| \nabla f_k \|^2, \notag 
    \end{align}
    as required.
\end{proof}

We are now in position to show the main result for \cref{alg: multilevel}. Recall the conditions introduced in \eqref{ineq:conditions-coarse-model-new} for iteration-dependent operators, i.e.,
\begin{equation} \label{ineq:conditions-coarse-model-new R_k}
    \|R_{i, k} \nabla f_{k+1}\| \geq \sigma \| \nabla f_{k+1}\|.
\end{equation}
\begin{theorem} \label{thm: rate strong convexity}
    Assume that the sequence $(x_k)_{k \in \N}$ is generated by \cref{alg: multilevel}. Then, for any $k \in \N$, the following statements hold true:
    \begin{enumerate}
        \item     If $\xi \leq 1$, then
    \begin{equation}
        \| \nabla f_{k+1} \| \leq \frac{L \xi^4}{2 \sigma \mu^2 \omega^4} \|\nabla f_{k} \|^2. \label{ineq: quadratic rate xi <1}
    \end{equation}
    \item  If $\xi > 1$, then
    \begin{equation}
        \| \nabla f_{k+1} \| \leq \frac{L \xi^5}{2 \sigma \mu^2 \omega^4} \|\nabla f_{k} \|^2. \label{ineq: quadratic rate xi >1}
    \end{equation}
    \end{enumerate}
\end{theorem}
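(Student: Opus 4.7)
The plan is to analyze each iteration of \cref{alg: multilevel} by splitting on the value of $i^*$ chosen in \cref{step: find}: either a coarse step is taken with $1\leq i^*\leq m-1$, in which case the selection rule \eqref{ineq:conditions-coarse-model-new R_k} is active, or a full Newton step is taken with $i^*=m$ (so $R_{m,k}=I_n$), in which case the classical quadratic rate \eqref{ineq: newton's quadratic rate} applies. Note that the Newton branch is always a valid fallback because $\sigma\leq 1$ and $R_{m,k}=I_n$ trivially satisfy \eqref{ineq:conditions-coarse-model-new R_k}.

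In the coarse-step branch, the key move is to pass from the reduced gradient back to the true gradient via \eqref{ineq:conditions-coarse-model-new R_k}, i.e., $\|\nabla f_{k+1}\|\leq\sigma^{-1}\|R_{i^*,k}\nabla f_{k+1}\|$, and then substitute the bound on $\|R_{i^*,k}\nabla f_{k+1}\|$ furnished by \cref{lemma: strongly convex rate subspace}. This immediately yields the $\xi^5$ constant of \eqref{ineq: quadratic rate xi >1}, which is precisely the statement of the theorem in the case $\xi>1$. For the case $\xi\leq 1$, the extra observation $\xi^5\leq\xi^4$ converts the same inequality into the $\xi^4$ constant of \eqref{ineq: quadratic rate xi <1}.

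In the Newton-step branch, \eqref{ineq: newton's quadratic rate} gives $\|\nabla f_{k+1}\|\leq \frac{L}{2\mu^2}\|\nabla f_k\|^2$, so I would only need to verify that this constant is dominated by the one claimed in each case. Using $\omega\leq\xi$ from \eqref{ineq: norm R_i, omega and xi} together with $\sigma\in(0,1]$, one has $\sigma\omega^4\leq\omega^4\leq\xi^4$, which handles $\xi\leq 1$; for $\xi>1$ the same chain combined with $\xi^4\leq\xi^5$ handles the second case, closing the argument in both regimes.

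There is no serious technical obstacle here. The whole argument is a straightforward combination of the acceptance rule \eqref{ineq:conditions-coarse-model-new R_k}, the subspace quadratic estimate of \cref{lemma: strongly convex rate subspace}, and the standard Newton rate, together with the elementary arithmetic fact that the Newton-step constant $L/(2\mu^2)$ is absorbed by the multilevel constants thanks to $\omega\leq\xi$ and $\sigma\leq 1$.
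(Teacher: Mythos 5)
Your proposal is correct and follows essentially the same route as the paper: split on whether a coarse or a Newton step is taken, use the acceptance rule \eqref{ineq:conditions-coarse-model-new R_k} together with \cref{lemma: strongly convex rate subspace} in the coarse branch, use \eqref{ineq: newton's quadratic rate} in the Newton branch, and absorb the Newton constant via $\omega\leq\xi$ and $\sigma\leq 1$, with $\xi^5\leq\xi^4$ (resp. $\xi^4\leq\xi^5$) handling the two regimes. The only difference is cosmetic bookkeeping — the paper keeps $\sigma\|\nabla f_{k+1}\|$ on the left throughout, whereas you divide by $\sigma$ immediately — and it does not affect the argument.
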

\begin{proof}
    Let us fix $k \in \N$. We consider the first case where $\xi \leq 1$. If at iteration $k$ a coarse step \eqref{eq: coarse step} is performed, then
    \begin{equation}
             \sigma \| \nabla f_{k+1}\|  \overset{\eqref{ineq:conditions-coarse-model-new R_k}}{\leq} \|R_{i,k} \nabla f_{k+1} \| \overset{\eqref{ineq: strongly convex rate subspace}}{\leq} \frac{L \xi^5}{2 \mu^2 \omega^4} \|\nabla f_{k} \|^2 \leq \frac{L \xi^4}{2 \mu^2 \omega^4} \|\nabla f_{k} \|^2. \label{ineq: fisrt case k xi <1}
    \end{equation}
    On the other hand, if the Newton step \eqref{eq: newton step} is used and since $0 < \sigma \leq 1$, we obtain
    \begin{equation}
        \sigma \| \nabla f_{k+1}\| \leq \| \nabla f_{k+1} \| \overset{\eqref{ineq: newton's quadratic rate}}{\leq} \frac{L}{2 \mu^2 } \|\nabla f_{k} \|^2 \overset{\eqref{ineq: norm R_i, omega and xi}}{\leq} \frac{L \xi^4}{2 \mu^2 \omega^4} \|\nabla f_{k} \|^2, \label{ineq: second case k xi<1}
    \end{equation}    
    and hence by \eqref{ineq: fisrt case k xi <1} and \eqref{ineq: second case k xi<1}, inequality \eqref{ineq: quadratic rate xi <1} is proved.
    
    Assume now that $\xi > 1$. If at iteration $k$ we use \eqref{eq: coarse step} to compute $x_{k+1}$, then
    \begin{equation}
             \sigma \| \nabla f_{k+1}\|  \overset{\eqref{ineq:conditions-coarse-model-new R_k}}{\leq} \|R_{i,k} \nabla f_{k+1} \| \overset{\eqref{ineq: strongly convex rate subspace}}{\leq} \frac{L \xi^5}{2 \mu^2 \omega^4} \|\nabla f_{k} \|^2. \label{ineq: fisrt case k xi >1}
    \end{equation}
    On the other hand, if the Newton step \eqref{eq: newton step} is performed, then
    \begin{equation}
        \sigma \| \nabla f_{k+1}\| \overset{\eqref{ineq:conditions-coarse-model-new R_k}}{\leq} \| \nabla f_{k+1} \| \overset{\eqref{ineq: newton's quadratic rate}}{\leq} \frac{L}{2 \mu^2 } \|\nabla f_{k} \|^2 \overset{\eqref{ineq: norm R_i, omega and xi}}{\leq} \frac{L \xi^4}{2 \mu^2 \omega^4} \|\nabla f_{k} \|^2 \leq \frac{L \xi^5}{2 \mu^2 \omega^4} \|\nabla f_{k} \|^2, \label{ineq: second case k xi>1}
    \end{equation}    
    and hence by \eqref{ineq: fisrt case k xi >1} and \eqref{ineq: second case k xi>1}, inequality \eqref{ineq: quadratic rate xi >1} is proved.
\end{proof}

Theorem \ref{thm: rate strong convexity} shows the progress of the norm of the gradient after taking one step in the \cref{alg: multilevel}, and that this progress depends on the magnitude of $\xi$. Further, \cref{thm: rate strong convexity} provides us with the following characterization about the local convergence of \cref{alg: multilevel}. Let us consider the case $\xi \leq 1$ (the case $\xi>1$ follows analogously). If  $\|\nabla f_{k} \| < \eta \coloneq \frac{2 \sigma \mu^2 \omega^4}{L \xi^4}$, then there exists a $C \in [0,1)$ such that $\|\nabla f_{k} \| \le C \eta$. Moreover, let $\ell \in \N \setminus \{0\}$. Then, applying \eqref{ineq: quadratic rate xi <1} recursively, we obtain
\begin{equation}\label{eq:convproof2}
\frac{1}{\eta} \| \nabla f_{k+\ell} \| \leq \left(\frac{1}{\eta} \| \nabla f_{k} \| \right)^{2^{\ell + 1}} \leq C^{2^{\ell + 1}},
\end{equation}
hence $\lim_{\ell \to \infty} \| \nabla f_{k+\ell} \| = 0$ with a quadratic convergence rate. Moreover, \cref{thm: rate strong convexity} shows that once \cref{alg: multilevel} enters its quadratic convergence, it maintains the quadratic rate for all subsequent iterations. This behavior is similar to that of the Newton method. It is expected for the \cref{alg: multilevel} to reach very high accuracy in a few iterations. We postpone a comparison between the quadratic convergence of \cref{alg: multilevel} and Newton's method for \cref{sec: discussion on theory}. In the remark below we attempt to explain intuitively why condition \eqref{ineq:conditions-coarse-model-new R_k} enforces a quadratic rate.

\begin{remark} \label{remark: quadratic rate theorem}
Let us for simplicity drop the $k$ dependence from the operators $R_{i, k}$ and consider \eqref{ineq: upper step before}. Inequality \eqref{ineq: upper step before} shows a quadratic convergence rate the in the subspace spanned by the rows of $R_i$. Then, there exists an iteration $k$ such that for all $1 \leq i \leq m$ \eqref{ineq: upper step before} is satisfied. Since \eqref{ineq:conditions-coarse-model-new R_k} requires the norm $\|\nabla f_{k+1} \|$ to be close to $\|R_i \nabla f_{k+1} \|$ and the latter decays quadratically, then \eqref{ineq:conditions-coarse-model-new R_k} forces the former norm to decrease with similar rate. In particular, according to \cref{thm: rate strong convexity}, if condition \eqref{ineq:conditions-coarse-model-new R_k} selects $1\leq i \leq m-1$, then the quadratic rate is enforced by the quadratic rate of the coarse step in the subspace, otherwise, if $i = m$ (fine level), it is guaranteed by Newton's algorithm.  An important observation is that the quadratic convergence in $\R^n$ is activated if and only if $\|R_i \nabla f_{k} \|$ enters its quadratic phase in the subspace. In general, condition \eqref{ineq:conditions-coarse-model-new R_k} ensures that the norms will decay similarly.
\end{remark}

\begin{remark}
We note that the quadratic convergence rate for \textsc{ML--Newton} in the subspace (see \eqref{ineq: upper step before}) is not new. Similar results appear in \cite{ho2019newton}. Our proof, however, is different and more general, as it supports iteration-dependent operators.
\end{remark}

\subsection{Self-concordant Functions} \label{sec: self conc analysis}
Self-concordance is crucial when analyzing Newton's method as it offers local convergence rates that are invariant to the affine transformation of variables. These rates are important, and are on par with the Newton's method's iterations; it seems that the class of self-concordant functions captures effectively the convergence behavior of Newton's method observed in practical applications. In this section, in a similar fashion, we attempt to explain our algorithm's practical behavior using strictly convex self-concordant functions. In particular, we will show a local quadratic convergence rate that it is independent of unknown constants such as $\mu, L$ involved in \cref{thm: rate strong convexity}.

Since we are dealing with self-concordant functions we will measure the algorithm's convergence using $\lambda(x)$ in \eqref{eq: newton decrement}. For the purposes of our analysis we introduce the quantity
\begin{align}
    g_{i, k}(x) & \coloneq \left[ \left(R_{i, k} \nabla f(x)\right)^\transp \left(R_{i, k} \nabla^2 f(x)P_{i, k}\right)^{-1} \left(R_{i, k} \nabla f(x)\right)\right]^{1/2}. \label{eq: def weird decrement}
\end{align}
The above quantity is well-defined due to $R_{i, k} \nabla^2 f(x)P_{i, k} \succ 0$. When setting $x \coloneq x_k$, it coincides with the approximate decrement $\hat{\lambda}_{i, k}$ used for the analysis of past multilevel algorithms \cite{ho2019newton, tsipinakis2024multilevel, tsipinakis2023lowrank}. It represents the local norm of the reduced gradient induced by the reduced Hessian matrix at iteration $k$ and level $i$. In what follows we collect results for self-concordant functions that will be useful for establishing the convergence results. Let $r \coloneq \|y-x\|_x$, where $\|\cdot\|_x$ is defined in \eqref{eq: local norms definition}. Given \eqref{ineq: def self concordance}, for any $x, y \in \R^n$ such that $Mr < 1$, it is possible to show \cite{nesterov2018lectures} that
\begin{align*}
    (1-Mr)^2 \nabla^2 & f(x)  \preceq \nabla^2 f(y)  \preceq  \frac{1}{(1-Mr)^2}\nabla^2 f(x), \\
    \left(1 - Mr + \frac{1}{3} M^2 r^2\right) \nabla^2 f(x) & \preceq G \coloneq \int_0^1 \nabla^2 f(x + t(y - x)) \d t \preceq \frac{1}{1 - Mr} \nabla^2 f(x).
\end{align*}
From the coarse step \eqref{eq: coarse step}, setting $y \coloneq x_{k+1}$ and $x \coloneq x_k$, we have that $r_k \coloneq \|x_{k+1}-x_k\|_{x_k} \overset{\eqref{eq: coarse step}}{=} \|d_{m,k}\|_{x_k} = \hat{\lambda}_{i, k}$. We will be using a short notation for the reduced gradient, Hessian, and average Hessian, i.e., $\nabla f_{i, k} \coloneq R_{i,k} \nabla f_k$,  $\nabla^2 f_{i, k} \coloneq R_{i,k} \nabla^2 f_k P_{i,k}$, and $G_{i, k} \coloneq R_{i,k} G_k P_{i,k}$, respectively. 

Left- and right-multiplying the above inequalities with $R_{i,k}$ and $P_{i, k}$, respectively, and, if $M \hat{\lambda}_{i, k} < 1$, then,
\begin{align}
    \big(1-M\hat{\lambda}_{i, k}\big)^2  \nabla^2 f_{i, k}   \preceq R_{i,k} \nabla^2 f_{k+1} P_{i,k} & \preceq  \frac{1}{\big(1-M\hat{\lambda}_{i, k}\big)^2} \nabla^2 f_{i,k}, \label{ineq: bounds hessians} \\
    \left(1 - M \hat{\lambda}_{i, k} + \frac{1}{3} M^2 \hat{\lambda}_{i, k}^2\right) \nabla^2 f_{i, k}  \preceq G_{i, k}   & \preceq \frac{1}{1 - M\hat{\lambda}_{i, k}} \nabla^2 f_{i, k}. \label{ineq: bounds G}
\end{align}
Moreover, using the short notation we get
\begin{equation} \label{eq: approx decrement short}
    \hat{\lambda}_{i, k} = \left[\nabla f_{i, k}^\transp \nabla^2 f_{i, k}^{-1} \nabla f_{i, k} \right]^{1/2}.
\end{equation}
The following result, proved in \cite[Lemma 2.3]{tsipinakis2024multilevel}, shows that the approximate decrement is always smaller than the Newton decrement for all $k \in \N$, i.e.,
\begin{equation} \label{ineq: upper bound approx decrement}
    \hat{\lambda}_{i, k} \leq \lambda(x_k).
\end{equation}
For our analysis we will need also the following lemma, proved in \cite{tsipinakis2025convergence}. 
\begin{lemma}[Lemma 3.1 in \cite{tsipinakis2025convergence}] \label{lemma: quadratic form bound on matrices}
Let $H, G$ be symmetric positive definite matrices and $a, b \in \R$ such that
\begin{equation} \label{ineq: ass in lemma appendix}
     a H \preceq G - H \preceq b H.
\end{equation}
Then,
\begin{equation} \label{ineq: quadratic form bound on matrices}
    \left( G - H\right) H^{-1} \left( G - H\right) \preceq c^2 H,
\end{equation}
where $c = \max\{|a|, |b| \}$.
\end{lemma}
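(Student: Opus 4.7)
The natural approach is to reduce the two-sided matrix inequality to a one-dimensional statement about the spectrum of a single auxiliary matrix. Since $H \succ 0$, its symmetric square root $H^{1/2}$ is well defined and invertible, so conjugating every side of \eqref{ineq: ass in lemma appendix} by $H^{-1/2}$ is a congruence and preserves the L\"owner order. This yields
\[
a I \preceq M \preceq b I, \qquad M \coloneq H^{-1/2}(G-H)H^{-1/2}.
\]
Because $G - H$ is symmetric, $M$ is symmetric, hence diagonalizable with real spectrum, and the two-sided bound says exactly that every eigenvalue of $M$ lies in $[a,b]$.

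The next step is to rewrite the left-hand side of \eqref{ineq: quadratic form bound on matrices} in terms of $M$. Inserting $H^{-1} = H^{-1/2} H^{-1/2}$ and using the definition of $M$, a one-line calculation gives
\[
(G-H)\, H^{-1} \,(G-H) = H^{1/2}\, M^{2}\, H^{1/2}.
\]
Thus \eqref{ineq: quadratic form bound on matrices} is equivalent, after a further congruence by $H^{-1/2}$, to the purely spectral statement $M^{2} \preceq c^{2} I$. This last inequality is immediate from the spectral theorem: writing $M = U \Lambda U^{\transp}$ with $\Lambda = \diag(\lambda_{1}, \ldots, \lambda_{n})$ and $\lambda_{j} \in [a,b]$, we have $M^{2} = U \Lambda^{2} U^{\transp}$ with $\lambda_{j}^{2} \leq \max\{a^{2}, b^{2}\} = c^{2}$, so $\Lambda^{2} \preceq c^{2} I$ and hence $M^{2} \preceq c^{2} I$. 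Unwinding the congruence by $H^{1/2}$ delivers \eqref{ineq: quadratic form bound on matrices}.

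I do not expect any genuine obstacle here; the lemma is essentially a matrix lift of the scalar inequality $t^{2} \leq c^{2}$ for $t \in [a,b]$. The only care required is the bookkeeping of the congruence transformations (each by $H^{\pm 1/2}$, which is symmetric and invertible, hence preserves $\preceq$) and the appeal to symmetry of $M$ so that its spectrum is real and $M^{2}$ admits the clean bound above.
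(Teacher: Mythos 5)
Your proof is correct, and it is the standard argument: conjugate \eqref{ineq: ass in lemma appendix} by $H^{-1/2}$ to obtain $aI \preceq M \preceq bI$ for $M = H^{-1/2}(G-H)H^{-1/2}$, observe that the target inequality is equivalent (after the same congruence) to $M^2 \preceq c^2 I$, and conclude by the spectral theorem since $M$ is symmetric with eigenvalues in $[a,b]$, so every eigenvalue of $M^2$ is at most $\max\{a^2,b^2\} = c^2$. Note that this paper does not itself prove the lemma; it cites it as Lemma 3.1 of \cite{tsipinakis2025convergence}, so there is no in-paper proof to compare against, but your argument is the natural one for this statement and I see no gap in it.

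One small caution in the write-up: the hypothesis requires $G-H$ to be symmetric (which it is, since $G$ and $H$ are), and this is exactly what makes $M$ symmetric and the spectral theorem applicable; you invoke this correctly, but it is worth stating explicitly at the point where you introduce $M$ rather than only when you diagonalize, since otherwise a reader might momentarily worry whether $M^2 \preceq c^2 I$ follows from $aI \preceq M \preceq bI$ alone (it does not for non-normal $M$).
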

The following lemma is key to our analysis. 
\begin{lemma} \label{lemma: self concordant rate subspace}
Let $M \lambda(x_k) < 1$. If the point $x_{k+1}$ is obtained by the coarse model as in \eqref{eq: coarse step}, then
\begin{equation}
    g_{i, k}(x_{k+1}) \leq \frac{M \lambda(x_k)^2}{(1-M\lambda(x_k))^2} \label{ineq: self concor rate subspace}
\end{equation}
for all $i \in \{1,2, \ldots, m-1 \}$ and $k \in \N$.
\end{lemma}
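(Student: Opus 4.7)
The plan is to mirror the strongly convex calculation in \cref{lemma: strongly convex rate subspace} but work entirely in the reduced local norm induced by $\nabla^2 f_{i,k}$, so that the bounds \eqref{ineq: bounds hessians}--\eqref{ineq: bounds G} and \cref{lemma: quadratic form bound on matrices} can replace the use of Lipschitz continuity of the Hessian. First, I would rewrite $R_{i,k}\nabla f_{k+1}$ using the Taylor identity $\nabla f_{k+1} = \nabla f_k + G_k d_{m,k}$ and the coarse step $d_{m,k} = -P_{i,k}\nabla^2 f_{i,k}^{-1}\nabla f_{i,k}$. Left-multiplying by $R_{i,k}$ and inserting the identity $\nabla^2 f_{i,k}\nabla^2 f_{i,k}^{-1}$ in the leading term gives the key representation
\begin{equation*}
    \nabla f_{i,k+1} = \bigl(\nabla^2 f_{i,k} - G_{i,k}\bigr)\nabla^2 f_{i,k}^{-1}\nabla f_{i,k},
\end{equation*}
which is the analogue of \eqref{eq: identity from taylor formula} at the reduced level.

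Second, I would estimate the reduced Hessian at $x_{k+1}$. Since $\hat\lambda_{i,k}\leq \lambda(x_k)$ by \eqref{ineq: upper bound approx decrement} and $M\lambda(x_k)<1$ by assumption, the upper bound in \eqref{ineq: bounds hessians} yields
\begin{equation*}
    \bigl(R_{i,k}\nabla^2 f_{k+1} P_{i,k}\bigr)^{-1} \preceq \frac{1}{(1-M\hat\lambda_{i,k})^2}\, \nabla^2 f_{i,k}^{-1}.
\end{equation*}
Plugging the representation of $\nabla f_{i,k+1}$ and this bound into the definition \eqref{eq: def weird decrement} of $g_{i,k}(x_{k+1})^2$ and setting $u \coloneq \nabla^2 f_{i,k}^{-1}\nabla f_{i,k}$ reduces the problem to controlling
\begin{equation*}
    u^\transp \bigl(\nabla^2 f_{i,k} - G_{i,k}\bigr)\nabla^2 f_{i,k}^{-1}\bigl(\nabla^2 f_{i,k} - G_{i,k}\bigr) u.
\end{equation*}

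Third, I would apply \cref{lemma: quadratic form bound on matrices} with $H = \nabla^2 f_{i,k}$ and $G = G_{i,k}$. From \eqref{ineq: bounds G}, the admissible constants are $a = -M\hat\lambda_{i,k}+\tfrac{1}{3}M^2\hat\lambda_{i,k}^2$ and $b = M\hat\lambda_{i,k}/(1-M\hat\lambda_{i,k})$; a short calculation shows $|b|\geq |a|$ for $M\hat\lambda_{i,k}\in(0,1)$, so $c = M\hat\lambda_{i,k}/(1-M\hat\lambda_{i,k})$. The lemma gives
\begin{equation*}
    \bigl(\nabla^2 f_{i,k} - G_{i,k}\bigr)\nabla^2 f_{i,k}^{-1}\bigl(\nabla^2 f_{i,k} - G_{i,k}\bigr) \preceq \frac{M^2\hat\lambda_{i,k}^2}{(1-M\hat\lambda_{i,k})^2}\,\nabla^2 f_{i,k}.
\end{equation*}
Using $u^\transp \nabla^2 f_{i,k} u = \hat\lambda_{i,k}^2$ (by \eqref{eq: approx decrement short}) and combining with the bound on the reduced inverse Hessian gives
\begin{equation*}
    g_{i,k}(x_{k+1}) \leq \frac{M\hat\lambda_{i,k}^2}{(1-M\hat\lambda_{i,k})^2}.
\end{equation*}

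Finally, to match the statement of the lemma I would replace $\hat\lambda_{i,k}$ by $\lambda(x_k)$ by noting that $t\mapsto Mt^2/(1-Mt)^2$ is increasing on $[0,1/M)$ and invoking $\hat\lambda_{i,k}\leq\lambda(x_k)$ from \eqref{ineq: upper bound approx decrement}. The main obstacle is the third step: getting a clean quadratic bound without an extra $1/(1-M\hat\lambda_{i,k})$ factor floating around requires applying \cref{lemma: quadratic form bound on matrices} (rather than a crude triangle-inequality bound on $\nabla^2 f_{i,k}-G_{i,k}$ in operator norm), and correctly identifying that it is the upper side of \eqref{ineq: bounds G} which dominates.
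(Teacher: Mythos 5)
Your proposal matches the paper's proof in substance and structure: the same reduced Taylor identity (your $u = \nabla^2 f_{i,k}^{-1}\nabla f_{i,k}$ is just $-d_{i,k}$), the same use of \eqref{ineq: bounds hessians} to control the reduced inverse Hessian at $x_{k+1}$, the same application of \cref{lemma: quadratic form bound on matrices} with $c = M\hat\lambda_{i,k}/(1-M\hat\lambda_{i,k})$, and the same closing monotonicity argument with \eqref{ineq: upper bound approx decrement}. The only cosmetic difference is that you identify $c$ by directly verifying $|b|\geq|a|$, whereas the paper first loosens the lower bound in \eqref{ineq: bounds G} to a symmetric $\pm\frac{M\hat\lambda_{i,k}}{1-M\hat\lambda_{i,k}}$ pair.
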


\begin{proof}
    Let us fix $i \in \{1,2, \ldots, m-1 \}$ and $k \in \N$. By our assumption and \eqref{ineq: upper bound approx decrement} we have $M\hat{\lambda}_{i, k} \leq M\lambda(x_k)<1$.
    Note also that
    \begin{equation} \label{eq: identity from taylor short}
        R_{i, k} \nabla f_{k+1} \overset{\eqref{eq: identity from taylor formula}}{=} -R_{i,k} \left( \nabla^2 f_{k} - G_k \right) d_{m,k} 
        \overset{\eqref{eq: coarse direction at m}}{=} - \left( \nabla^2 f_{i, k} - G_{i, k} \right) d_{i, k},
    \end{equation}   
    where 
    \begin{equation} \label{eq: d_i,k short not}
        d_{i, k} = - \nabla^2 f_{i, k}^{-1} \nabla f_{i, k}.
    \end{equation}
    Furthermore, using \eqref{ineq: bounds G} and subtracting $\nabla^2 f_{i, k}$  we get
\begin{align}
     \left(- M \hat{\lambda}_{i, k} + \frac{1}{3} M^2 \hat{\lambda}_{i, k}^2\right)\nabla^2 f_{i, k} & \preceq G_{i, k} - \nabla^2 f_{i, k} \preceq \frac{M\hat{\lambda}_{i, k}}{1 - M\hat{\lambda}_{i, k}} \nabla^2 f_{i, k} \notag \\
    \implies  - \frac{M\hat{\lambda}_{i, k}}{1 - M\hat{\lambda}_{i, k}}\nabla^2 f_{i, k} & \preceq G_{i, k} - \nabla^2 f_{i, k} \preceq \frac{M\hat{\lambda}_{i, k}}{1 - M\hat{\lambda}_{i, k}} \nabla^2 f_{i, k}.
\end{align}
Thus, applying \cref{lemma: quadratic form bound on matrices} with $c = \frac{M\hat{\lambda}_{i, k}}{1 - M\hat{\lambda}_{i, k}}$ we have that
\begin{equation} \label{ineq: quadratic form for matrices}
    \left( G_{i, k} - \nabla^2 f_{i, k} \right) \nabla^2 f_{i, k}^{-1} \left(  G_{i, k} - \nabla^2 f_{i, k} \right) \preceq \left( \frac{M\hat{\lambda}_{i, k}}{1 - M\hat{\lambda}_{i, k}} \right)^2 \nabla^2 f_{i, k}.
\end{equation}
  Hence,
    \begin{align}
            g_{i, k}(x_{k+1}) & \overset{\eqref{eq: def weird decrement}}{=} \left[ \left(R_{i, k} \nabla f_{k+1}\right)^\transp \left(R_{i, k} \nabla^2 f_{k+1} P_{i, k}\right)^{-1} \left(R_{i, k} \nabla f_{k+1} \right)\right]^{1/2} \notag \\
            & \overset{\eqref{ineq: bounds hessians}}{\leq} \frac{1}{1 - M\hat{\lambda}_{i, k}}
            \left[ \left(R_{i, k} \nabla f_{k+1} \right)^\transp \left(R_{i, k} \nabla^2 f_{k} P_{i, k}\right)^{-1} \left(R_{i, k} \nabla f_{k+1} \right)\right]^{1/2} \notag \\
            & \overset{\eqref{eq: identity from taylor short}}{=} \frac{1}{1 - M\hat{\lambda}_{i, k}} \left[d_{i, k}^\transp \left( G_{i, k} - \nabla^2 f_{i, k} \right) \nabla^2 f_{i, k}^{-1} \left(  G_{i, k} - \nabla^2 f_{i, k} \right) d_{i, k}  \right]^{1/2} \notag \\
            & \overset{\eqref{ineq: quadratic form for matrices}}{\leq} \frac{M\hat{\lambda}_{i, k}}{\big(1 - M\hat{\lambda}_{i, k}\big)^2} \left[d_{i, k}^\transp \nabla^2 f_{i, k} d_{i, k}  \right]^{1/2} \notag \\
            & \overset{\eqref{eq: d_i,k short not}}{=} \frac{M\hat{\lambda}_{i, k}^2}{\big(1 - M\hat{\lambda}_{i, k}\big)^2} \notag \\
            & \overset{\eqref{ineq: upper bound approx decrement}}{\leq}  \frac{M\lambda(x_k)^2}{\left(1 - M\lambda(x_k)\right)^2}, \notag \label{ineq: last bound on norm with newton decrement}
    \end{align}
    where the last inequality holds since $x \mapsto \frac{M x^2}{(1-Mx)^2}$ is increasing on $(0, 1/M)$. This completes the proof.
\end{proof}

The result of \cref{lemma: self concordant rate subspace} is analogous to that of \cref{lemma: strongly convex rate subspace} but uses the local norms instead. 
To show the quadratic rate we require to look one step ahead to determine whether the future coarse step is efficient. Our main condition in \eqref{ineq:conditions-coarse-model-new R_k} must then be adjusted accordingly. That is, the coarse model at level $i$ is accepted if
\begin{equation} \label{ineq: conditions self-concordant}
    g_{i, k}(x_{k+1}) \geq \sigma \lambda(x_{k+1}),
\end{equation}
where $\sigma \in (0, 1]$. Note that such conditions require the computation of the inverse Hessian matrix at each iteration and thus they are not efficient for practical applications. However, they are important for enhancing our understanding of  the iterates of \cref{alg: multilevel}. In practice, even when minimizing a self-concordant function, one should check \eqref{ineq:conditions-coarse-model-new R_k}, which is cheaper and serves the same purpose.

\begin{theorem} \label{thm: quadratic rate self conc}
    Assume that the sequence $(x_k)_{k \in \N}$ is generated by \cref{alg: multilevel}. Then, if $M \lambda(x_k) < 1$,
    \begin{equation} \label{ineq: quadratic rate self conc}
        \lambda(x_{k+1}) \leq \frac{M \lambda(x_{k})^2}{\sigma (1-M\lambda(x_{k}))^2}
    \end{equation}
    for any $k \in \N$.
\end{theorem}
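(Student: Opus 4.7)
The plan is to mirror the two-case structure used in the proof of \cref{thm: rate strong convexity}, splitting on whether iteration $k$ ends with a coarse step \eqref{eq: coarse step} or with a Newton step \eqref{eq: newton step}, and in each case derive the same upper bound on $\sigma \lambda(x_{k+1})$. Dividing by $\sigma$ then yields \eqref{ineq: quadratic rate self conc}.

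First, suppose that at iteration $k$ the algorithm accepts some coarse level $1 \leq i \leq m-1$, so that \eqref{ineq: coarse step} is used. Then the self-concordant acceptance criterion \eqref{ineq: conditions self-concordant} applies, giving $\sigma \lambda(x_{k+1}) \leq g_{i,k}(x_{k+1})$. I would then invoke \cref{lemma: self concordant rate subspace}, whose hypothesis $M\lambda(x_k) < 1$ matches the assumption of the theorem, to bound $g_{i,k}(x_{k+1}) \leq \frac{M\lambda(x_k)^2}{(1-M\lambda(x_k))^2}$. Chaining these two inequalities produces the desired estimate on $\sigma\lambda(x_{k+1})$.

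Second, suppose the Newton step \eqref{eq: newton step} is used. Since $\sigma \in (0,1]$, we trivially have $\sigma \lambda(x_{k+1}) \leq \lambda(x_{k+1})$, and the classical self-concordant quadratic rate for Newton's method \eqref{ineq: newton quadratic phase self concor} (applicable since $M\lambda(x_k) < 1$) gives exactly $\lambda(x_{k+1}) \leq \frac{M\lambda(x_k)^2}{(1-M\lambda(x_k))^2}$. Combining yields the same bound as in the coarse case.

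In both cases we obtain
\begin{equation*}
\sigma \lambda(x_{k+1}) \leq \frac{M \lambda(x_k)^2}{(1-M\lambda(x_k))^2},
\end{equation*}
and dividing by $\sigma$ produces \eqref{ineq: quadratic rate self conc}. The argument is essentially a transcription of \cref{thm: rate strong convexity} into the self-concordant setting; the genuine work has already been absorbed into \cref{lemma: self concordant rate subspace}, whose proof (using the coherency identity \eqref{eq: identity from taylor formula}, the sandwich bounds \eqref{ineq: bounds hessians}--\eqref{ineq: bounds G}, and \cref{lemma: quadratic form bound on matrices}) is the only substantive step. Consequently I do not anticipate a serious obstacle here: the sole subtlety is to observe that the acceptance condition \eqref{ineq: conditions self-concordant} is precisely designed so that the one-step-ahead bound on $g_{i,k}(x_{k+1})$ transfers, up to the factor $1/\sigma$, to a bound on $\lambda(x_{k+1})$, and to note that the Newton branch inherits the rate directly from \eqref{ineq: newton quadratic phase self concor}.
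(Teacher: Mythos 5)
Your proposal matches the paper's proof exactly: both split on whether iteration $k$ uses the coarse step \eqref{eq: coarse step} or the Newton step \eqref{eq: newton step}, bound $\sigma\lambda(x_{k+1})$ via the acceptance condition \eqref{ineq: conditions self-concordant} together with \cref{lemma: self concordant rate subspace} in the first case and via $\sigma\le 1$ together with \eqref{ineq: newton quadratic phase self concor} in the second, then divide by $\sigma$. No gaps; this is essentially a transcription of the paper's argument.
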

\begin{proof}
    Let us fix $k \in \N$. If $x_{k+1}$ is obtained by \eqref{eq: coarse step}, then
    \begin{equation*}
      \sigma \lambda(x_{k+1}) \overset{\eqref{ineq: conditions self-concordant}}{\leq} g_{i, k}(x_{k+1}) \overset{\eqref{ineq: self concor rate subspace}}{\leq} \frac{M \lambda(x_k)^2}{(1-M\lambda(x_k))^2}.
    \end{equation*}
    On the other hand, if $x_{k+1}$ is obtained by \eqref{eq: newton step}, then
    \begin{equation*}
        \sigma \lambda(x_{k+1}) \leq \lambda(x_{k+1}) \overset{\eqref{ineq: newton quadratic phase self concor}}{\leq} \frac{M \lambda(x_{k})^2}{\left( 1-M \lambda(x_{k})\right)^2},
    \end{equation*}
    where the first inequality follows from $\sigma \in (0, 1]$. Thus, the proof is finished.
\end{proof}

Theorem \ref{thm: quadratic rate self conc} show the progress of the local norm of the gradient after taking one step in \cref{alg: multilevel}. It provides us with the following description about the local convergence of the method: If $\lambda(x_{k}) < \frac{2\sigma + 1 - \sqrt{4 \sigma + 1}}{2 M \sigma}$, then 
\begin{equation*}
    \lambda(x_{k+1}) \leq \frac{M \lambda(x_{k})^2}{\sigma (1-M\lambda(x_{k}))^2} < \lambda(x_{k}),
\end{equation*}
and in particular, similar to \eqref{eq:convproof2}, for $\ell \ge k$, we have  $\lim_{\ell \to \infty} \lambda(x_{\ell}) = 0$ with quadratic convergence rate. The algorithm will maintain the quadratic rate for all $k$ once it enters this phase. Note that this result is more informative than that of \cref{thm: rate strong convexity}. For $M=1$ (standard self-concordant functions), it does not depend on unknowns such as Lipschitz and strong convexity parameters. More importantly, compared to \cref{thm: rate strong convexity}, the rate above also remains invariant for any choice of $R_{i,k}$, and thus the theory of  \textsc{AML--Newton} method aligns with that of the classical Newton method.

\subsection{Discussion of the Convergence Results} \label{sec: discussion on theory}
Let us recall the main results of this section. For simplicity, assume that $\xi = \omega = 1$ in \eqref{thm: rate strong convexity}. Then 
\begin{equation} \label{ineq: quadratic rates together}
    \| \nabla f(x_{k+1}) \| \leq \frac{L}{2 \sigma \mu^2 } \|\nabla f(x_{k}) \|^2 \quad \text{and} \quad \lambda(x_{k+1}) \leq \frac{M \lambda(x_{k})^2}{\sigma (1-M\lambda(x_{k}))^2}.
\end{equation}
An important attribute of \cref{alg: multilevel} is that the quadratic rate is governed by the user-defined parameter $\sigma \in (0, 1]$. If the user selects $\sigma$ small to allow for more coarse steps, then the above bounds yield slower rate of convergence in the quadratic phase. However, note that, since the quadratic phase is very fast (convergence attained in a few iterations), the main effort of \cref{alg: multilevel} is spent in the slower first phase. Recall also that, due to \eqref{ineq: quadratic rates together}, the size of the region of quadratic convergence is proportional to $\sigma$. Hence, if $\sigma$ is small then \cref{alg: multilevel} selects smaller coarse models but the phase of quadratic convergence will be activated in late stages of the minimization process. As a result, there is also a trade-off between the choice of $\sigma$ and the size of the region of quadratic convergence. Insights regarding the region of quadratic rate may also be given by the following theorem.

\begin{theorem} \label{thm: rate interpretation}
    Suppose that $\|R_{i, k} \nabla f(x_{k})\| < 2 \mu^2 / L$ for all $i \in \{1,2, \ldots, m \}$. Suppose also that \eqref{ineq:conditions-coarse-model-new R_k} holds and define 
    \begin{equation*}
            a_{i, k} \coloneq \frac{\|R_{i, k} \nabla f(x_{k+1})\|}{\|R_{i, k} \nabla f(x_{k})\|}.
    \end{equation*}
    Then there exists $K_\sigma \in \N$ such that for all $k \ge K_\sigma$ we have
    \begin{equation*}
         \max_i \{ a_{i, k} \} < \sigma \quad \text{and} \quad \|\nabla f(x_{k+1})\| < \|\nabla f(x_{k})\|.
    \end{equation*}
\end{theorem}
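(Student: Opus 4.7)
The plan is to reuse the chain of estimates leading to inequality \eqref{ineq: upper step before} in the proof of \cref{lemma: strongly convex rate subspace}. For the level $i$ at which the step is formed, that chain gives
\[
\|R_{i,k}\nabla f_{k+1}\|\le\tfrac{L\xi^3}{2\mu^2\omega^4}\|R_{i,k}\nabla f_k\|^2,
\]
and dividing by $\|R_{i,k}\nabla f_k\|$ yields the clean estimate $a_{i,k}\le\tfrac{L\xi^3}{2\mu^2\omega^4}\|R_{i,k}\nabla f_k\|$. Under the stated hypothesis, which implicitly corresponds to the normalization $\xi=\omega=1$ matching the threshold $2\mu^2/L$, this immediately gives $a_{i,k}<1$, so the reduced gradient on the chosen level is strictly contracted at iteration $k$.

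To promote ``$<1$'' into ``$<\sigma$'' uniformly in $i$, I would invoke \cref{thm: rate strong convexity}: the hypothesis at $i=m$, where $R_{m,k}=I_n$, reads $\|\nabla f_k\|<2\mu^2/L$, which places $\nabla f_k$ inside the quadratic-convergence basin of the algorithm; consequently $\|\nabla f_k\|\to 0$ at a superlinear rate. Since $\|R_{i,k}\nabla f_k\|\le\|R_{i,k}\|\,\|\nabla f_k\|\le\xi\|\nabla f_k\|$ for every level $i$, every reduced-gradient norm is driven to zero simultaneously. I then define $K_\sigma$ as the first iteration from which $\|R_{i,k}\nabla f_k\|<\tfrac{2\sigma\mu^2\omega^4}{L\xi^3}$ holds uniformly in $i$, which is finite by the previous sentence. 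Applying the estimate of the first paragraph at each $k\ge K_\sigma$ yields $\max_i a_{i,k}<\sigma$, the first conclusion.

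The second conclusion follows by combining the first with \eqref{ineq:conditions-coarse-model-new R_k} applied to the level $i^*$ selected by the algorithm at iteration $k$:
\[
\sigma\|\nabla f_{k+1}\|\le\|R_{i^*,k}\nabla f_{k+1}\|=a_{i^*,k}\,\|R_{i^*,k}\nabla f_k\|<\sigma\|R_{i^*,k}\nabla f_k\|\le\sigma\|\nabla f_k\|,
\]
where the last step uses $\|R_{i^*,k}\|\le\xi=1$ under the same normalization; dividing by $\sigma$ gives $\|\nabla f_{k+1}\|<\|\nabla f_k\|$. Alternatively, one can appeal to \cref{thm: rate strong convexity} directly after enlarging $K_\sigma$ so that $\|\nabla f_k\|<2\sigma\mu^2/L$, a condition reached in finitely many iterations by the quadratic decay established in paragraph two.

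The main technical obstacle I anticipate is that \eqref{ineq: upper step before} only controls $\|R_{i,k}\nabla f_{k+1}\|$ for the \emph{level actually used to compute the step}, whereas the theorem demands a bound on $a_{i,k}$ uniformly in $i$. The proposed resolution is exactly the universal majorant $\|R_{i,k}\nabla f_k\|\le\xi\|\nabla f_k\|$, which, paired with the quadratic decay of $\|\nabla f_k\|$ guaranteed by \cref{thm: rate strong convexity}, forces every reduced gradient below the critical threshold after finitely many iterations and thus delivers the uniform bound $\max_i a_{i,k}<\sigma$.
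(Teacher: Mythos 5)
Your proof takes essentially the same route as the paper's: both bound $a_{i,k}\le\frac{L}{2\mu^2}\|R_{i,k}\nabla f(x_k)\|$ via \eqref{ineq: upper step before} under the normalization $\omega=\xi=1$, use the hypothesis (which at $i=m$ reads $\|\nabla f(x_k)\|<2\mu^2/L$) together with the quadratic decay of \cref{thm: rate strong convexity} and the majorant $\|R_{i,k}\nabla f(x_k)\|\le\|\nabla f(x_k)\|$ to drive $a_{i,k}$ below $\sigma$ after some $K_\sigma$, and close with the identical chain $\sigma\|\nabla f_{k+1}\|\le\|R_{i,k}\nabla f_{k+1}\|=a_{i,k}\|R_{i,k}\nabla f_k\|<\sigma\|\nabla f_k\|$. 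One small caveat: the subtlety you flag in your last paragraph (that \eqref{ineq: upper step before} bounds $\|R_{i,k}\nabla f_{k+1}\|$ only for the level that generated the step) is not actually resolved by the majorant $\|R_{i,k}\nabla f_k\|\le\xi\|\nabla f_k\|$, since the majorant controls the denominator and the threshold rather than the numerator of $a_{i,k}$; the paper's proof simply applies \eqref{ineq: upper step before} for every fixed $i$, implicitly reading $a_{i,k}$ as measuring the contraction produced by the trial coarse step at that level, and your proof ultimately rests on the same implicit reading.
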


\begin{proof}
    Let us fix $i \in \{1,2, \ldots, m \}$ and $k \in \N$, and recall that we consider $\omega = \xi = 1$. We have
    \begin{equation*}
         a_{i, k} \overset{\eqref{ineq: upper step before}}{\leq} \frac{L}{2 \mu^2 } \| R_{i, k} \nabla f(x_{k}) \|.
    \end{equation*}
    Moreover, by \eqref{ineq: upper step before} and $\|R_{i, k} \nabla f(x_{k})\| < 2 \mu^2 / L$,  the sequence $\| R_{i, k} \nabla f(x_{k}) \|$ converges to zero (similarly as to the explanation after the proof of Theorem~\ref{thm: rate strong convexity}), and thus $(a_{i, k})_{k \in \N}$ is a null sequence. Then there exists $K_\sigma \in \N$ such that
    $a_{i, k} < \sigma$ for all $k \ge K_\sigma$ and all $i \in \{1,2, \ldots, m \}$, which proves the left inequality of the theorem. In addition,
    \begin{equation*}
         \| \nabla f(x_{k+1})\|  \overset{\eqref{ineq:conditions-coarse-model-new R_k}}{\le} \frac{1}{\sigma} \|R_{i, k} \nabla f(x_{k+1}) \| = \frac{a_{i, k}}{\sigma}  \|R_{i, k} \nabla f(x_{k}) \|  <  \|\nabla f(x_{k}) \|,
    \end{equation*}
    and thus the proof is finished.
\end{proof}
\cref{thm: rate interpretation} shows that the quadratic rate of \cref{alg: multilevel} will be activated if  $\sigma > a_{i, k} $ for all  $i \in \{1,2, \ldots, m \}$. 
Thus, even though the quadratic rate in the subspace is activated already, if $\sigma$ is chosen small,  the quadratic rate in $\R^n$ will be activated only in the late stages of the quadratic rate in $\R^{n_i}$. On the other hand, if $\sigma = 1$, the local region of quadratic rates in $\R^{n}$ and $\R^{n_i}$ coincide (which is also obvious from \eqref{ineq: upper step before} and \eqref{ineq: quadratic rates together}). 

When it is time for \cref{alg: multilevel} to enter the quadratic phase, it has to select coarse models in $\R^{n_i}$ such that the decrease in $\| \nabla f(x_{k})\|$ is similar to that observed by the trial step for $\| R_i \nabla f(x_{k})\|$. However, note that the latter norm decreases quadratically and with rate that matches that of the Newton method. Hence, \cref{alg: multilevel} attempts to match the convergence rate of Newton's method, given some $\sigma \in (0,1]$. It is natural then for \cref{alg: multilevel}, in order to satisfy this requirement, to choose subspaces where $n_i$ is large enough or even $n_i=n$.  Then, one should expect the \cref{alg: multilevel} to take expensive steps, that resemble the complexity of Newton's method, once the quadratic phase is activated.

We will finish this section by comparing \cref{alg: multilevel} and Newton's method. Note that the only difference between the estimates \eqref{ineq: quadratic rates together} and Newton's estimates in  \eqref{ineq: newton's quadratic rate} and \eqref{ineq: newton quadratic phase self concor} is the presence of $0 < \sigma \leq 1$ in \eqref{ineq: quadratic rates together}. Thus, Newton's local quadratic rate is faster than that of \cref{alg: multilevel}. Since, however, quadratic rates are very fast, the main difference between \cref{alg: multilevel} and Newton's method comes from the region of their local convergence. The estimates \eqref{ineq: quadratic rates together} imply a restricted region for the local quadratic convergence, and thus \cref{alg: multilevel} requires more steps in the first phase than Newton's method. However, this is not necessarily a disadvantage of \cref{alg: multilevel} because it generates fast iterates in levels with $n_i < n$ or even $n_i \ll n$.  On the other hand, it is known that Newton's global convergence achieves a linear rate \cite{karimireddy2018global}, and thus it does not justify its high computational demands. Therefore, one should find a replacement of Newton's method in the first phase and only apply Newton in the second phase. This is exactly what \cref{alg: multilevel} does, and importantly, through condition \eqref{ineq:conditions-coarse-model-new R_k}, it offers an automatic way to switch from linear to a Newton-like quadratic convergence in the late stages of optimization. We will verify the theoretical findings of this section in \cref{sec: experiments}.

\subsection{Relation to Randomized Subspace Newton Method} Newton-type subspace methods have attracted a lot of interest due their efficiency for large-scale optimization \cite{gower2019rsn, hanzely2020stochastic, tsipinakis2025regularized}. By keeping the subspace fixed, the key difference to multilevel methods is that they can converge to the minimizer of $f$ without ever computing directions on the fine level. This fact makes subspace methods superior to multilevel methods in practical applications where $n$ is prohibitively large to form the Hessian matrix. Regarding their convergence rate, they achieve  probabilistic linear convergence rates, unless extra assumptions related to the problem structure are imposed. 

Due to the generic framework developed in sections \ref{sec: new multilevel} and \ref{sec: analysis}, \cref{alg: multilevel} can be directly seen as the RSN. Let us 
define surjective linear mappings $R_k : \R^n \to \R^{n_{\rm low}}$ for $k \in \N$, where $n_{\rm low} < n$. Then, the corresponding restriction operator at some iteration $k$ is denoted by $R_k$ and we set $P_k \coloneq R_k^\transp$. Then the iterates for the subspace Newton method are given as
\begin{equation} \label{eq: subspace methods iterates}
    x_{k+1} = x_k - P_k \left(R_k \nabla^2 f(x_k) P_k \right)^{-1} R_k \nabla f(x_k).
\end{equation}
RSN method generates $x_{k+1}$ randomly at each iteration by drawing the entries of $R_k$ from a probability distribution $\mathcal{D}$. We can use the basic Johnson-Lindenstrauss (JL) result to show a local probabilistic quadratic convergence rate for the scheme \eqref{eq: subspace methods iterates}. 
\begin{lemma}[\cite{johnson1984extensions, kane2014sparser}] \label{lemma: JL probabilistic}
    For any $\varepsilon \in (0, 1/2)$ and $\delta \in (0, 1/2)$ there exist a scalar $C>0$ and a probability distribution $\mathcal{D}$ on the set of $n_{\rm low} \times n$ real matrices with $n_{\rm low} \coloneq C \varepsilon^{-2} \log(1/\delta)$ such that for any $x \in \R^n$ and $k \in \N$,
    \begin{equation} \label{ineq: JL probabilistic}
        \mathbb{P}_{R_k \sim \mathcal{D}} \left( (1-\varepsilon)\| x \| \leq \|R_k x\| \leq  (1+\varepsilon) \| x \| \right) > 1 - \delta.
    \end{equation}
\end{lemma}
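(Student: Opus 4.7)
The plan is to exhibit an explicit distribution $\mathcal{D}$ realizing the concentration bound \eqref{ineq: JL probabilistic} and to back out the required dimension $n_{\rm low}$ from the resulting tail bound. The simplest choice is to take $R_k \in \R^{n_{\rm low} \times n}$ with independent entries drawn from $\mathcal{N}(0, 1/n_{\rm low})$; with this normalization $\mathbb{E}\bigl[\|R_k x\|^2\bigr] = \|x\|^2$ for every fixed $x \in \R^n$, so the statement reduces to a two-sided concentration of $\|R_k x\|^2$ around its mean with failure probability at most $\delta$. Note that the $k$ dependence is irrelevant: the claim is about a single draw from $\mathcal{D}$.

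By positive homogeneity of both sides of \eqref{ineq: JL probabilistic} in $\|x\|$, I would reduce to the unit-vector case $\|x\|=1$. Rotational invariance of the Gaussian distribution then shows that $n_{\rm low} \|R_k x\|^2$ is the sum of $n_{\rm low}$ independent squared standard Gaussians, so it follows a $\chi^2$ distribution with $n_{\rm low}$ degrees of freedom. A Chernoff-type tail bound (for instance Laurent--Massart) then yields, for all $t \in (0,1)$,
\begin{equation*}
\mathbb{P}\bigl( \bigl| \|R_k x\|^2 - 1 \bigr| > t \bigr) \leq 2 \exp\bigl(-c\, n_{\rm low} t^2\bigr)
\end{equation*}
for an absolute constant $c > 0$. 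A short calculation shows that for $\varepsilon \in (0, 1/2)$ the event $\bigl|\|R_k x\|^2 - 1\bigr| \leq \varepsilon$ implies both $\|R_k x\|^2 \geq (1-\varepsilon)^2$ (since $(1-\varepsilon)^2 \leq 1-\varepsilon$) and $\|R_k x\|^2 \leq (1+\varepsilon)^2$ (since $1+\varepsilon \leq (1+\varepsilon)^2$), hence the desired two-sided bound $(1-\varepsilon)\|x\| \leq \|R_k x\| \leq (1+\varepsilon)\|x\|$. Setting $t = \varepsilon$ and demanding the right-hand side be at most $\delta$ produces $n_{\rm low} \geq c^{-1} \varepsilon^{-2} \log(2/\delta)$, which is of the claimed form $C\,\varepsilon^{-2} \log(1/\delta)$ after enlarging $C$.

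The main obstacle, and essentially the only nontrivial step, is establishing the sub-Gaussian concentration of $\chi^2_{n_{\rm low}}$ with the sharp exponent $\varepsilon^2$ rather than $\varepsilon$; this is proved by computing the moment generating function of $\chi^2_{n_{\rm low}}$ in closed form and optimizing the exponential Markov bound in its free parameter, an elementary but delicate tuning argument. If one prefers the sparse matrix construction of \cite{kane2014sparser} for computational efficiency, the Gaussian rotational shortcut is no longer available, and one must instead bound a quadratic form in Rademacher variables via decoupling and Hanson--Wright-type moment estimates; this is technically heavier but produces the same asymptotic embedding dimension $n_{\rm low} = \Theta\bigl(\varepsilon^{-2} \log(1/\delta)\bigr)$.
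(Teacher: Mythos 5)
Your proposal is correct, and it follows the canonical Gaussian route. A point you should be aware of: the paper does not prove this lemma at all — it is stated as a citation to \cite{johnson1984extensions, kane2014sparser}, so there is no in-text proof to compare against. What you have written is the standard distributional Johnson--Lindenstrauss argument (Gaussian entries scaled by $1/\sqrt{n_{\rm low}}$, rotational invariance to reduce to $\chi^2_{n_{\rm low}}$ concentration, Laurent--Massart or Chernoff tail with the $t^2$ exponent, and the elementary inequalities $(1-\varepsilon)^2 \le 1-\varepsilon$ and $1+\varepsilon \le (1+\varepsilon)^2$ to pass from the squared-norm bound to the norm bound). The logic is sound; two small cosmetic points worth noting. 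First, the statement asserts the existence of a scalar $C$ ``for any $\varepsilon, \delta$,'' but your proof delivers the stronger, standard fact that $C$ can be taken as a universal constant independent of $\varepsilon$ and $\delta$. Second, the lemma asks for a strict inequality ($> 1-\delta$), whereas your Chernoff calculation naturally gives $\ge 1-\delta$; this is trivially repaired by a marginal enlargement of $C$ (or replacing $\delta$ by $\delta/2$ inside the argument), but it is the kind of off-by-a-constant bookkeeping a careful write-up should flag. Your closing remark about the sparse constructions of \cite{kane2014sparser} requiring Hanson--Wright-type arguments is accurate and correctly separates what is elementary (the Gaussian case) from what is not.
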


\begin{theorem} \label{cor: probabilistic rate strong convexity}
    Let $\varepsilon \in (0, 1/2)$, $\delta \in (0, 1/2)$, $C$ and $\mathcal{D}$ be as in \cref{lemma: JL probabilistic}.
    Suppose that the entries of $R_k \in \R^{n_{\rm low} \times n}$ for $k \in \N$ are drawn from $\mathcal{D}$ and that the sequence $(x_k)_{k \in \N}$ is generated by \cref{eq: subspace methods iterates}. If $n_{\rm low} \geq C \varepsilon^{-2} \log(1/\delta)$, then for any $k \in \N$,
    \begin{equation} \label{ineq: quadratic rate probablistic}
      \mathbb{P}_{R_k \sim \mathcal{D}} \left(  \| \nabla f_{k+1} \| \leq \frac{L}{2 \mu^2 }\left(\frac{1+\varepsilon}{1 - \varepsilon} \right)^5 \|\nabla f_{k} \|^2 \right) > (1-\delta)^2.
    \end{equation}
\end{theorem}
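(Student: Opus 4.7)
The plan is to mirror the proof of \cref{thm: rate strong convexity}, replacing the deterministic constants $\xi \geq \|R_{i,k}\|$, $\omega \leq \sigma_{n_i}(P_{i,k})$ and the selection parameter $\sigma$ by the probabilistic surrogates $1+\varepsilon$, $1-\varepsilon$, and $1-\varepsilon$ supplied by \cref{lemma: JL probabilistic}. Concretely, I would first apply \cref{lemma: JL probabilistic} to the two vectors $\nabla f_k$ and $\nabla f_{k+1}$ and take a union bound: with probability greater than $(1-\delta)^2$, both
\[
(1-\varepsilon)\|\nabla f_k\| \leq \|R_k \nabla f_k\| \leq (1+\varepsilon)\|\nabla f_k\|
\]
and
\[
(1-\varepsilon)\|\nabla f_{k+1}\| \leq \|R_k \nabla f_{k+1}\| \leq (1+\varepsilon)\|\nabla f_{k+1}\|
\]
hold. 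Call this event $E$.

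Conditioning on $E$, I would then replay the subspace quadratic estimate of \cref{lemma: strongly convex rate subspace}. The RSN normal equation $R_k \nabla^2 f_k d_{m,k} = -R_k \nabla f_k$ together with Taylor's identity $\nabla f_{k+1}=\nabla f_k + G_k d_{m,k}$ gives, exactly as in \eqref{eq: identity from taylor formula},
\[
R_k \nabla f_{k+1} = -R_k\bigl(\nabla^2 f_k - G_k\bigr) d_{m,k}.
\]
Lipschitz continuity of the Hessian bounds the Taylor remainder by $\|\nabla^2 f_k - G_k\| \leq \tfrac{L}{2}\|d_{m,k}\|$ as in \eqref{ineq: integral lipschitz}, and strong convexity gives $R_k \nabla^2 f_k P_k \succeq \mu R_k R_k^\transp$. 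Combining these with the JL-derived surrogates so that $1-\varepsilon$ plays the role of $\omega$ and $1+\varepsilon$ the role of $\xi$ in the chain \eqref{ineq: bounds on RP}--\eqref{ineq: upper bound on d_m} yields $\|d_{m,k}\|^2 \leq \tfrac{(1+\varepsilon)^4}{\mu^2(1-\varepsilon)^4}\|\nabla f_k\|^2$, and collecting factors reproduces the analogue of \eqref{ineq: strongly convex rate subspace},
\[
\|R_k \nabla f_{k+1}\| \leq \frac{L(1+\varepsilon)^5}{2\mu^2(1-\varepsilon)^4}\|\nabla f_k\|^2.
\]
Using the JL lower bound $(1-\varepsilon)\|\nabla f_{k+1}\| \leq \|R_k \nabla f_{k+1}\|$ to transfer the estimate from the sketched to the full gradient then produces the claimed rate $\tfrac{L}{2\mu^2}\bigl(\tfrac{1+\varepsilon}{1-\varepsilon}\bigr)^5\|\nabla f_k\|^2$, and the probability $(1-\delta)^2$ tracks the two JL applications through the union bound.

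The main obstacle is that several of these substitutions implicitly replace \emph{operator-norm} bounds on $R_k$ and $P_k$---for example in controlling $\|P_k w\|$ or $\|R_k v\|$ for vectors $v$, $w$ that themselves depend on $R_k$ through $d_{m,k}$---whereas \cref{lemma: JL probabilistic} is only a \emph{pointwise} statement. To make the chain rigorous one should either invoke a subspace-embedding strengthening of JL (still achievable with $n_{\rm low} = O(\varepsilon^{-2}\log(1/\delta))$ for standard sketching distributions), or rewrite the step bound directly via $\|d_{m,k}\|^2 = w^\transp R_k R_k^\transp w$ with $w = (R_k \nabla^2 f_k P_k)^{-1} R_k \nabla f_k$, exploiting $R_k \nabla^2 f_k P_k \succeq \mu R_k R_k^\transp$ to reduce everything to the one-sided event $\sigma_{\min}(R_k R_k^\transp) \geq (1-\varepsilon)^2$, which can then be absorbed into a single union bound with the two JL events above.
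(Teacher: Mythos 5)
Your proposal follows the same high-level route as the paper's proof: instantiate the bounds in \cref{lemma: strongly convex rate subspace} with the JL surrogates $1-\varepsilon$ and $1+\varepsilon$ in place of $\omega$ and $\xi$, use the JL lower bound on $\|R_k\nabla f_{k+1}\|$ to play the role of the selection parameter $\sigma$, and then push through the argument of \cref{thm: rate strong convexity}. The substance of the derivation---Taylor's identity, the Lipschitz Hessian bound, the strong-convexity bound on the sketched Hessian, and the resulting $\tfrac{L(1+\varepsilon)^5}{2\mu^2(1-\varepsilon)^4}$ estimate---is exactly the paper's chain of inequalities.

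Two points are worth flagging. First, the union bound you invoke for the two JL events gives probability $>1-2\delta$, which is strictly smaller than the stated $(1-\delta)^2 = 1-2\delta+\delta^2$; the paper instead multiplies the two $1-\delta$ bounds, which only yields $(1-\delta)^2$ if the two events were independent (they are not, since both involve the same $R_k$ and the second involves a vector $\nabla f_{k+1}$ that depends on $R_k$). Neither route is airtight as written, but the union bound as you present it does not reach the advertised constant. Second, and more substantively, your closing observation is well taken: \cref{lemma: JL probabilistic} is a pointwise statement for a fixed vector, whereas the paper's step ``with probability at least $1-\delta$, we immediately obtain $\omega\ge 1-\varepsilon$ and $\xi\le 1+\varepsilon$'' requires simultaneous control of all singular values of $R_k$ (a subspace-embedding property), and the subsequent JL application is to $\nabla f_{k+1}$, which is a random vector correlated with $R_k$. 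The paper elides these issues; your suggested patches---a subspace-embedding strengthening of JL, or reducing everything to the single one-sided event $\sigma_{\min}(R_kR_k^{\transp})\ge (1-\varepsilon)^2$---correctly identify what would be needed to make the argument fully rigorous, and go beyond what the paper itself supplies.
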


\begin{proof}
    From \cref{ineq: JL probabilistic}, with probability at least $1-\delta$, we immediately obtain $\omega \ge 1-\varepsilon$ and $\xi \le 1 + \varepsilon$ in \eqref{eq: def omega and xi}. 
    Thus, inequalities \eqref{ineq: bounds on RP} and \eqref{ineq: upper bound on d_m} become
        \begin{equation*}
        R_{i, k} P_{i, k} \succeq \sigma_{n_i}^2 I_n \succeq (1-\varepsilon)^2 I_n \quad \text{and} \quad \| d_{m, k}\| \leq \frac{1 + \varepsilon}{\mu (1-\varepsilon)^2} \| R_{i, k}\nabla f_k \|,
    \end{equation*}
    where both remain true with probability at least $1-\delta$. Hence, the result of \cref{lemma: strongly convex rate subspace} becomes 
    \begin{equation*}
        \|R_{i,k} \nabla f_{k+1} \| \leq \frac{L (1 + \varepsilon)^5}{2 \mu^2 (1-\varepsilon)^4} \|\nabla f_{k} \|^2,
    \end{equation*}
        with probability at least $1-\delta$. Moreover, \eqref{ineq: JL probabilistic} implies that condition \eqref{ineq:conditions-coarse-model-new R_k} holds with probability at least $1-\delta$ and  $\sigma = 1-\varepsilon$. Using this and applying the proof of \cref{thm: rate strong convexity}, inequality \eqref{ineq: quadratic rate probablistic} follows immediately.
\end{proof}

\cref{cor: probabilistic rate strong convexity} shows that for sufficiently large $n_{\rm low}$, if $\|\nabla f_{k} \| < \frac{2 \mu^2}{L }\left(\frac{1-\varepsilon}{1 + \varepsilon} \right)^5$, then 
\begin{equation*}
     \| \nabla f_{k+1} \| \leq  \frac{L}{2 \mu^2 } \left(\frac{1+\varepsilon}{1 - \varepsilon} \right)^5 \|\nabla f_{k} \|^2 < \|\nabla f_{k} \|
\end{equation*}
holds with probability at least $(1-\delta)^2$ for each $k$. That is, at each iteration, the subspace method advances to $k+1$ quadratically with high probability. More importantly, it implies high total probability of converging quadratically. In particular, suppose that the algorithm takes $K_\epsilon$ steps in the the second phase. Then, the total probability to converge quadratically is $(1-\delta)^{2K_{\epsilon}}$. Since quadratic convergence is very fast ($K_{\epsilon}$ is at most $5$ or $6$ in practice) and $\delta$ is small by \cref{cor: probabilistic rate strong convexity}, then the total probability $(1-\delta)^{2K_{\epsilon}}$ remains high. Particular examples of $\mathcal{D}$ that satisfy \cref{lemma: JL probabilistic} include but are not limited to Gaussian distributions, e.g., Gaussian matrices, and uniform distributions, e.g., 1- and s-hashing matrices, for more details see \cite{cartis2022randomised} and references therein. Explicit convergence rates with these particular examples can be derived in a similar manner, nevertheless, the theorem does not seem practical since quadratic rates in the subspace could not be observed (which is a requirement in our theory, see \cref{remark: quadratic rate theorem}). However, \cref{cor: probabilistic rate strong convexity} is important as it demonstrates that probabilistic quadratic rates are possible. Specifically, it suggests that, for such rates to be attainable, one should design restriction operators $R_k$ that satisfy \cref{lemma: JL probabilistic} and ensure that the quadratic rate is activated within the relevant subspace. A probabilistic analysis for the class self-concordant functions is not straightforward as it requires a similar result to \cref{lemma: JL probabilistic} based on local norms.

\section{Numerical Experiments} \label{sec: experiments}

In this section, we aim to validate our theoretical findings using real-world problems. Specifically, we seek to demonstrate:
\begin{enumerate}
\item the quadratic convergence rate of \cref{alg: multilevel} and the influence of the parameter $\sigma$
(see \eqref{ineq: quadratic rates together}),
\item the extent to which optimization progress can be made without ever taking fine-level steps,
\item that \cref{alg: multilevel} is particularly efficient for problems with low-rank structure,
\item that \cref{alg: multilevel} outperforms Newton's method, Gradient Descent, and the classical multilevel Newton method on real-world problems.
\item that \cref{alg: multilevel} can attain \emph{quadratic convergence} with a \emph{total complexity} lower than that of the RSN \cite{gower2019rsn}.
\end{enumerate}

To address the above questions and assess the  performance of \cref{alg: multilevel}, we consider solving the regularized GLM problem
\begin{equation*}
    \min_{x \in \R^n} f(x) + \frac{l_2}{2} \|x\|^2 +  l_1 h(x), 
\end{equation*}
where $l_1, l_2 >0$, and $ h(x) \coloneq \sum_{i=1}^n \left( \sqrt{c^2 + x_i^2} - c \right)$ is a differentiable approximation of the ${\| \cdot \|}_1$-norm that imposes sparsity to the solution $x^*$ when $c > 0$ is small. Given a collection $\{(a_i, b_i )\}_{i = 1}^d$, where $a_i \in \R^n$ and $b_i \in \R$, we consider two candidates for the function  $f$:
\begin{enumerate}
    \item the \emph{Logistic regression loss function} with $b_i \in \{-1, 1\}$ for $i = 1,\ldots,d$, i.e.,
    \begin{equation*}
        f(x) = \sum_{i=1}^d \left[ \log\left(1 + \e^{a_i^\transp x}\right) - b_i \, a_i^\transp x \right].
    \end{equation*}
    \item the \emph{Poisson regression loss function} with  $b_i \in \N$ for $i = 1,\ldots,d$, i.e.,
    \begin{equation*}
        f(x) = \sum_{i=1}^d \left( \e^{a_i^\transp x} - b_i a_i^\transp x \right).
    \end{equation*}
\end{enumerate}
Both problems are widely used in practical machine learning applications. The logistic loss satisfies the strongly convex assumption  under mild conditions \cite{sypherd2020alpha}, whereas the Poisson loss is a self-concordant function. \cref{tab:problem_datasets} shows details of the datasets used and the associated loss function.

\begin{table}[bt]
\centering
\caption{Overview of experiments and dataset dimensions. The datasets are available from \url{https://www.csie.ntu.edu.tw/~cjlin/libsvmtools/datasets/}, \url{https://www.10xgenomics.com/datasets/pbmc-3-k-1-standard-3-0-0}, \url{https://archive.ics.uci.edu/dataset/275/bike+sharing+dataset} and \url{https://opendata.cityofnewyork.us/}. The Generated dataset is drawn from $\mathcal{N}(0, 1)$ and has rank 10.}
\label{tab:problem_datasets}
\begin{tabular}{l l c c}
\toprule
\textbf{Problem} & \textbf{Dataset} & $d$ & $n$ \\
\midrule
Poisson & Single‐cell RNA   & 2,700   & 10,754 \\
Poisson & Bike      & 17,379   & 12 \\
Poisson & Data 311 & 6,359 & 157\\
Poisson & Generated & 1,000   & 900 \\
Logistic& Ctslices  & 53,500   & 385 \\
Logistic& Duke  & 38   & 7,129 \\
Logistic& Gisette  & 6,000   & 5,000 \\
Logistic& w8a  & 49,749   & 300 \\
Logistic& a9a  & 32,561   & 123 \\
\bottomrule
\end{tabular}
\end{table}

\subsection{Implementation Details} \label{sec: implementation details}
Our ultimate goal is to develop an algorithm that converges rapidly to the optimal solution $x^*$. To this end, we require coarse models that are both informative and of sufficiently small dimension. We begin by considering the case in which \cref{alg: multilevel} constructs coarse models using fixed operators $R_i$. In this setting, the design of the coarse levels plays a crucial role in the algorithm’s success. A poorly chosen design may cause \cref{alg: multilevel} to perform numerous trial coarse steps, significantly increasing the computational complexity. To prevent such inefficiency, the union of the subspaces must span the full space of the original problem, thereby ensuring that the update \eqref{eq: coarse step} can affect all components of $x_k$ for $1 \leq i \leq m-1$. Otherwise, it is natural for \cref{alg: multilevel} to select the Newton step to compensate for missing information in the subspaces. In what follows, we describe a simple procedure for constructing such coarse levels.

Let $S_n \coloneqq \{1, 2, \ldots, n\}$. We construct a family of subsets $S_{n_1}, S_{n_2}, \ldots, S_{n_{m-1}} \subset S_n$, where $n_1 < n_2 < \cdots < n_{m-1} < n$. The construction proceeds in two phases: \medskip
\begin{itemize}
    \item \textbf{Coverage phase (subsets $S_{n_1}, \ldots, S_{n_r}$):}  
    For some $r < m-1$, sample elements uniformly without replacement from $S_n$ to construct subsets of cardinality $n_i$ such that
\begin{equation*}
        \bigcup_{i=1}^r S_{n_i} = S_n.
\end{equation*}
    \item \textbf{Overlap phase (subsets $S_{n_{r+1}}, \ldots, S_{n_m}$):}  
    For each $i > r$, sample $n_i$ elements from $S_n$ according to a probability distribution $p_i$, without replacement, satisfying
\begin{equation*}
        p_i(j) \propto \frac{1}{1 + c_j},
\end{equation*}
    where $c_j$ denotes the number of times the element $j \in S_n$ has already been selected in previous subsets.
\end{itemize} \medskip

The above process ensures that all elements of $S_n$ are included in the subsets and, in addition, it keeps the redundancy balanced. We are now in a position to give the following definition for the restriction  operators $R_i$.

\begin{definition} \label{def: R_i}
Let $S_n \coloneqq \{1, 2, \ldots, n\}$ and define subsets $S_{n_1}, S_{n_2}, \ldots, S_{n_{m-1}} \subset S_n$, where $n_1 < n_2 < \cdots < n_{m-1} < n$.
Further, denote by $s_{n_{i, j}}$ the $j$-th elements of $S_{n_i}$, where $1 \leq j \leq n_i$. We construct $R_i$ as follows: the $j$-th row of $R_i$ is the $s_{n_{i, j}}$-th row of the identity matrix $I_n$, and, in addition, we set $P_i \coloneq R_i^\transp$.
\end{definition}
Note that \cref{def: R_i} satisfies the full-rank assumption on $R_i$ since each element in $S_{n_i}$ is unique. We can similarly define iteration-dependent operators $R_{i,k}$. However, due to their dependence on the iteration index $k$, we can adopt a much simpler construction for the subsets $S_{n_i}$. Specifically, at each iteration, the subsets $S_{n_i}$ are formed by uniform sampling without replacement from $S_n$, thereby ensuring that all elements of $S_n$ are included infinitely many times in the limit. Formally, we have: \medskip
\begin{itemize}
    \item \textbf{Uniformly selected subsets:}  We form each subset $S_{n_i}$, $1 \leq i \leq m-1$, by selecting uniformly without replacement $n_i$ elements from the set $S_n$.
\end{itemize} \medskip
Note that, under \cref{def: R_i}, we have $\omega = \xi = 1$ and thus the convergence rate for strongly convex functions is simplified as in \eqref{ineq: quadratic rates together}.
From now onward, unless otherwise specified, when \cref{alg: multilevel} uses fixed operators, the former definition of the $S_{n_i}$ subsets will be used to construct $R_i$ in \cref{def: R_i}. We call this version of \cref{alg: multilevel} \textsc{AML--Newton} (fixed) as it uses fixed subspaces. If \cref{alg: multilevel} performs steps using iteration-dependent $R_{i, k}$, then they are generated based on the latter definition of the $S_{n_i}$ subsets. We call this version of \cref{alg: multilevel} \textsc{AML--Newton} (random) since the subspaces are generated randomly at each iteration. Both versions of \cref{alg: multilevel} calculate the damping parameter $\alpha_k$ using the Armijo rule.

It is well known that multilevel (or subspace) methods are particularly effective for problems that have a low-rank structure, e.g., their Hessian is nearly low-rank, or the second-order information is concentrated in the leading eigenvalues \cite{ho2019newton, tsipinakis2024multilevel}. In contrast, when the curvature is spread across many directions, the first phase of multilevel methods can be slow, which may prevent the algorithm from entering the quadratic phase. In this case, larger values of $\sigma$ should be selected in order allow for more Newton steps. Similarly, one could randomly permute the $m$ levels at each iteration, introducing an approximately $1/m$ probability of selecting the Newton step. If the problem exhibits a low-rank structure, we wish the coarse models to be employed across the first phase, and hence $\sigma$ should be selected small, nevertheless not too small to compromise the quadratic second stage of the algorithm.

\subsection{Quadratic Rates in Practice} \label{sec: quadratic rates in practice}

In this section, we empirically validate our theoretical results. In particular, we show that, under the setup described in \cref{sec: implementation details}, \textsc{A\textsc{ML--Newton}} exhibits a locally quadratic convergence rate. Moreover, we demonstrate that the size of the region of the quadratic convergence depends on the choice of $\sigma$, as discussed in \cref{ineq: quadratic rates together}. Finally, we illustrate that it is possible to enter the quadratic convergence phase using only coarse steps, however, achieving high-precision solutions ultimately requires a final step computed via Newton’s method.

We design a specific experiment to illustrate our theoretical findings. We set $n_1 = \left\lceil 0.1n \right\rceil$ and $n_{m-1} = n - 1$, and generate the maximum number of levels possible within this range. That is, the hierarchy consists of $n - n_1$ coarse levels, in addition to the original level with $n$ dimensions. The subspaces for both the fixed and random versions of \textsc{AML--Newton} are then constructed as described in \cref{sec: implementation details}, and the corresponding restriction operators are obtained using \cref{def: R_i}. The results appear in \cref{fig:sigma_dataset_comparison} and \cref{tab:sigma_all_datasets}.

\begin{figure}[bt]
    \centering

    \begin{subfigure}[b]{0.49\textwidth}
        \includegraphics{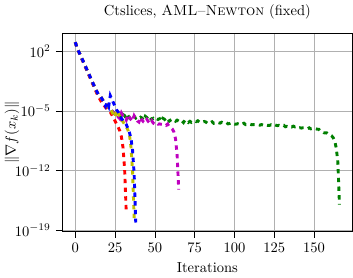}
        \caption{Gradient norm vs. iterations of \textsc{AML--Newton} (fixed) for the  Ctslices dataset}
    \end{subfigure}
    \hfill
    \begin{subfigure}[b]{0.49\textwidth}
    \includegraphics{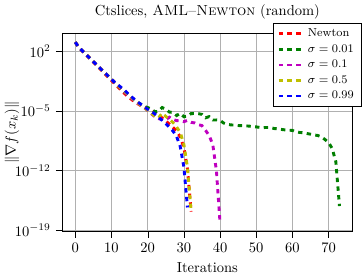}
        \caption{Gradient norm vs. iterations of \textsc{AML--Newton} (random) for the Ctslices dataset}
    \end{subfigure}

    \begin{subfigure}[b]{0.49\textwidth}
        \includegraphics{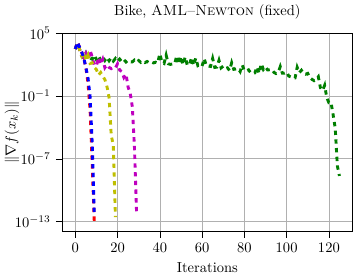}
        \caption{Gradient norm vs. iterations of \textsc{AML--Newton} (fixed) for the Bike dataset}
    \end{subfigure}
    \hfill
    \begin{subfigure}[b]{0.49\textwidth}
         \includegraphics{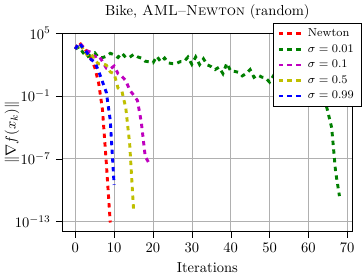}
         \caption{Gradient norm vs. iterations of \textsc{AML--Newton} (random) for the Bike dataset}
    \end{subfigure}

    \begin{subfigure}[b]{0.49\textwidth}
        \includegraphics{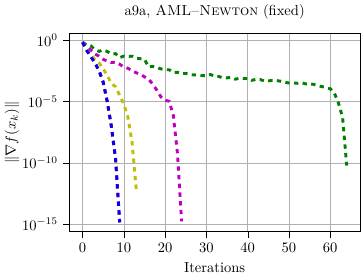}
        \caption{Gradient norm vs. iterations of \textsc{AML--Newton} (fixed) for the a9a dataset}
    \end{subfigure}
    \hfill
    \begin{subfigure}[b]{0.49\textwidth}
      \includegraphics{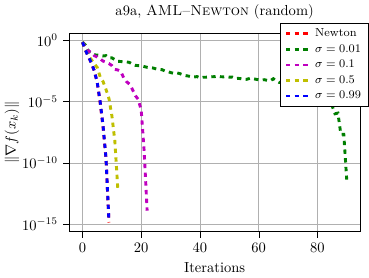}
        \caption{Gradient norm vs. iterations of \textsc{AML--Newton} (random) for the a9a dataset}
    \end{subfigure}

    \caption{The plots illustrate the local quadratic rates of \cref{alg: multilevel} for different choices of $\sigma$ in comparison to those of the Newton method. The number of fine steps taken by \cref{alg: multilevel} for each experiment can be found in \cref{tab:sigma_all_datasets}.}
    \label{fig:sigma_dataset_comparison}
\end{figure}

\begin{table}[bt]
\centering
\caption{Number of fine steps for different $\sigma$ values and three datasets. The results correspond to \cref{fig:sigma_dataset_comparison}.}
\label{tab:sigma_all_datasets}
\begin{tabular}{llcccc}
\toprule
\textbf{Dataset} & \textbf{Algorithm version}  & \multicolumn{4}{c}{\# fine steps for different $\sigma$ values} \\
\cmidrule(lr){3-6}
& & $\sigma = 0.01$ & $0.1$ & $0.5$ & $0.99$ \\
\midrule
\multirow{2}{*}{Ctslices} 
    & \textsc{AML--Newton} (fixed)   & 1 & 1 & 2 & 2 \\
    & \textsc{AML--Newton} (random)   & 1 & 1 & 2 & 2 \\
\midrule
\multirow{2}{*}{a9a} 
    & \textsc{AML--Newton} (fixed)   & 1 & 2 & 2 & 2 \\
    & \textsc{AML--Newton} (random)   & 0 & 0 & 1 & 3 \\
\midrule
\multirow{2}{*}{Bike} 
    & \textsc{AML--Newton} (fixed)   & 1  & 2  & 2 & 3 \\
    & \textsc{AML--Newton} (random)   & 1  & 1  & 2 & 2 \\
\bottomrule
\end{tabular}
\end{table}

In particular, \cref{fig:sigma_dataset_comparison} shows that both versions of \textsc{A\textsc{ML--Newton}} exhibit quadratic convergence, thus verifying the results of \cref{sec: analysis}. Further, the figure illustrates the influence of the parameter $\sigma$ on the size of the region of quadratic convergence. Specifically, when $\sigma$ is chosen small, a more restricted region of quadratic convergence should be expected (see \cref{thm: rate interpretation}), resulting in more steps required by the algorithm in the first phase. On the other hand, when $\sigma$ is close to one, the convergence behavior of \textsc{A\textsc{ML--Newton}} closely matches that of Newton’s method. 
This behavior aligns with the estimates provided in~\eqref{ineq: quadratic rates together}. In all cases, \textsc{A\textsc{ML--Newton}} yields a process that automatically transitions from an inexpensive method in the initial phase to a Newton-like method once the iterates enter the quadratic convergence regime.

Furthermore, as shown in \cref{tab:sigma_all_datasets}, \textsc{A\textsc{ML--Newton}} consistently requires at least one Newton step to attain very high accuracy. In particular, we observe that the algorithm is capable of entering the quadratic convergence phase using only coarse models. However, the final iterate that achieves high-precision accuracy is always computed via a Newton step. This suggests that it is possible to converge quadratically to a very accurate solution without invoking Newton's method. Of course, the coarse models selected in the quadratic phase tend to be of higher dimensionality and resemble the Newton step. 

Moreover, we note that in some instances, \cref{alg: multilevel} appears to enter the quadratic phase only to exit it a few iterations later or not to converge to the same level of accuracy as Newton's method. This behavior disagrees with our theory. One possible explanation is that the method had not entered the quadratic regime in all subspaces simultaneously (see \cref{thm: rate interpretation}). Indeed, in these cases, no Newton step was taken, as indicated in \cref{tab:sigma_all_datasets}. Another common cause is numerical instability at very high precision: even if coarse models reach the quadratic phase, the method may appear to exit as the gradients become tiny.
Because \(\|R_{i,k}\nabla f(x_k)\|\le \|\nabla f(x_k)\|\) (see \cref{def: R_i}) and the quadratic region for \cref{alg: multilevel} is governed by the reduced gradient, an excessively small \(\|R_{i,k}\nabla f(x_k)\|\) can induce ill-conditioning and make \textsc{AML--Newton} stall, falsely suggesting it left the quadratic regime.
We also note that this experiment, in order to confirm and explain the theoretical results, uses an unusually large number of coarse levels, which makes the above instabilities more likely. In practical applications, however, one should only use a few coarse levels, e.g., five in number. In this case, we have not encountered such issues (see the next section), thus, these instabilities are expected to be uncommon in practice.

\begin{figure}[t]
    \centering
    \begin{subfigure}[b]{0.49\linewidth}
     \includegraphics{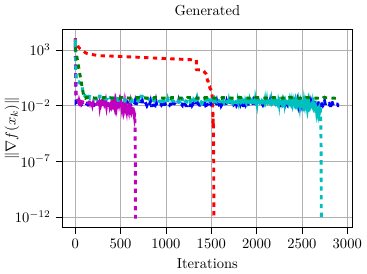}
        \caption{Gradient norm vs. iterations for the Generated dataset with $\ell_1$-regularization}
    \end{subfigure}
    \hfill
    \begin{subfigure}[b]{0.49\linewidth}
      \includegraphics{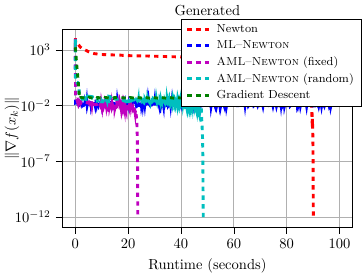}
        \caption{Gradient norm vs. runtime for the Generated dataset with $\ell_1$-regularization}
    \end{subfigure}


    \begin{subfigure}[b]{0.49\textwidth}
      \includegraphics{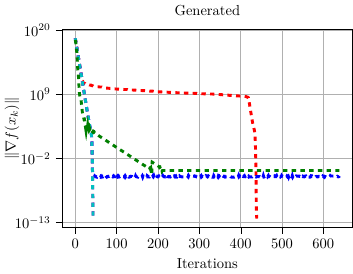}
        \caption{Gradient norm vs. iterations for the Generated dataset with $\mathcal{N}(0, 1)$ initialization}
    \end{subfigure}
    \hfill
    \begin{subfigure}[b]{0.49\textwidth}
       \includegraphics{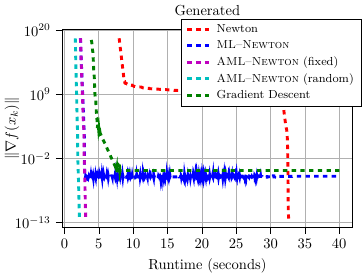}
        \caption{Gradient norm vs. runtime for the Generated datset with $\mathcal{N}(0, 1)$ initialization}
    \end{subfigure}
    \caption{Comparisons between optimization algorithms on Poisson regression with Generated data.
    The left column shows the convergence rate, the right column shows the runtime in seconds. The plot shows the convergence behavior of optimization algorithms for low-rank structured problems. The $\ell_1$-regularization or a bad initial point pose difficulties to optimization methods. Newton's first phase can be extremely slow in such structures, while Gradient Descent and the \textsc{ML--Newton} method converge to sub-optimal solutions.}
    \label{fig:generated_dataset_comparison}
\end{figure}

\subsection{Empirical Comparison of Optimization Algorithms}
In this section we demonstrate the efficiency of \textsc{A\textsc{ML--Newton}} for real-world problems. To obtain efficient multilevel algorithms, we keep the number of levels and coarse model dimensions small. In particular, we make use of $m=6$ models in total (including the fine model), where the coarse models dimensions range from $n_1 = \left\lceil 0.1n \right\rceil$ to $n_5 = \left\lceil 0.3n \right\rceil$, and the intermediate level dimensions are created equidistantly. Since the coarse model dimensions are small, \textsc{A\textsc{ML--Newton}} is expected to enter its quadratic phase selecting mainly the Newton step. The setup of the problems solved in this section is presented in \cref{tab:combined_experiment}. 
\begin{table}[bt]
\centering
\caption{Experimental setups for the Logistic and Poisson regression problems.}
\label{tab:combined_experiment}
\begin{tabular}{llccccl}
\toprule
\textbf{Problem} & \textbf{Dataset} & $x_0$ & $\sigma$ & $l_1$ & $l_2$ \\
\midrule
Logistic & Ctslices  & 0               & 0.2 & $10^{-3}$ & $10^{-6}$ \\
Logistic & Duke      & 0               & 0.1 & 0         & $10^{-6}$ \\
Logistic & Gisette   & 0               & 0.5 & 0         & $10^{-6}$ \\
Logistic & w8        & $\mathcal{N}(0, 1)$ & 0.2 & 0         & $10^{-6}$ \\
Poisson & Generated  & 0               & 0.1 & $10^{-3}$ & $10^{-6}$ \\
Poisson & Generated  & $\mathcal{N}(0, 1)$ & 0.1 & $10^{-3}$ & $10^{-6}$ \\
Poisson & Data 311        & 0 & 0.5 & $10^{-2}$  & $10^{-6}$ \\
Poisson & Single-cell RNA        & 0                  & 0.5 & 0         & $10^{-6}$ \\
\bottomrule
\end{tabular}
\end{table}
As with \eqref{ineq:conditions-coarse-model-new R_k}, the following condition will also be used to check whether the coarse model is selected at iteration $k$, i.e.,
\begin{equation} \label{ineq: extra condition for experiments}
    \|R_{i,k} \nabla f(x_{k})\| \geq \sigma \| \nabla f(x_{k})\|.
\end{equation}
Condition \eqref{ineq: extra condition for experiments} arises from the classical multilevel methods (see \eqref{ineq:conditions-coarse-model-classical}), and it appears to speed-up the algorithm's performance in practice as it discards ineffective coarse models without taking a trial step. Further, for \textsc{AML--Newton} (fixed) and \textsc{ML--Newton}, we noticed that it is efficient to assign a probability distribution $p_i$ for selecting coarse models, since, using the ordinary hierarchy, the conditions tend to select the same levels during the optimization. To avoid this behavior, we randomly permute the 5 coarse levels with each permutation having the same probability. With this, each level $n_i$ has probability $0.2$ to appear first in the hierarchy.  Moreover, since all algorithms included in the comparison have well-established global convergence theories, we employ an Armijo line search to compute the damping parameter at each iteration. The results of the algorithm comparisons are shown in Figures \ref{fig:generated_dataset_comparison}, \ref{fig:logistic regression comparison}, \ref{fig:logistic+poisson regression comparison}, and \ref{fig:generated subspace comparisons}. Moreover, Tables \ref{tab:logistic regression comparison}, \ref{tab:logistic+poisson regression comparison}, and \ref{tab:generated-steps} are associated with Figures \ref{fig:logistic regression comparison}, \ref{fig:logistic+poisson regression comparison}, and \ref{fig:generated subspace comparisons} respectively, and show the total number of coarse and fine steps taken during optimization. 

\begin{table}[bt]
\centering
\caption{Total number of coarse and fine steps computed by multilevel methods during optimization. The results correspond to \cref{fig:logistic regression comparison}.}
\label{tab:logistic regression comparison}
\begin{adjustbox}{max width=\linewidth}
\begin{tabular}{lcccccc}
\toprule
\textbf{Method} & \multicolumn{2}{c}{Ctslices} & \multicolumn{2}{c}{Duke} & \multicolumn{2}{c}{w8a} \\
& \# coarse & \# fine & \# coarse & \# fine & \# coarse & \# fine \\
\midrule
\textsc{AML--Newton} (fixed) & 451 & 10 & 48 & 4 & 85 & 8 \\
\textsc{AML--Newton} (random) & 860 & 37 & 45 & 5 & 109 & 6 \\
\textsc{ML--Newton}               & 1,223 & 0 & 373 & 0 & 248 & 0 \\
\bottomrule
\end{tabular}
\end{adjustbox}
\end{table}

\begin{table}[tb]
\centering
\caption{Total number of coarse and fine steps computed by multilevel methods during optimization. The results of this table correspond to \cref{fig:logistic+poisson regression comparison}.}
\label{tab:logistic+poisson regression comparison}
\begin{adjustbox}{max width=\linewidth}
\begin{tabular}{lcccccc}
\toprule
\textbf{Method} & \multicolumn{2}{c}{Gisette} & \multicolumn{2}{c}{Data 311} & \multicolumn{2}{c}{Single-cell RNA} \\
& \# coarse & \# fine & \# coarse & \# fine & \# coarse & \# fine \\
\midrule
\textsc{AML--Newton} (fixed) & 91 & 11 & 1,231 & 64 & 99 & 9 \\
\textsc{AML--Newton} (random) & 94 & 9 & 536 & 54 & 119 & 8 \\
\textsc{ML--Newton}               & 84 & 2 & 953 & 10 & 160 & 0 \\
\bottomrule
\end{tabular}
\end{adjustbox}
\end{table}

\begin{figure}[tb]
    \centering

    \begin{subfigure}[b]{0.49\textwidth}
    \includegraphics{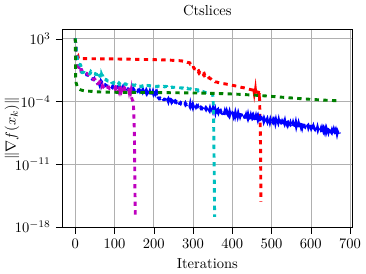}
        \caption{Gradient norm vs. iterations for the  Ctslices dataset with $\ell_1$-regularization}
    \end{subfigure}
    \hfill
    \begin{subfigure}[b]{0.49\textwidth}
      \includegraphics{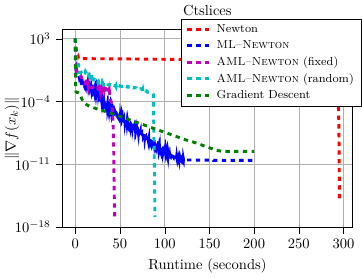}
        \caption{Gradient norm vs. runtime for the Ctslices dataset with $\ell_1$-regularization}
    \end{subfigure}

    \begin{subfigure}[b]{0.49\textwidth}
     \includegraphics{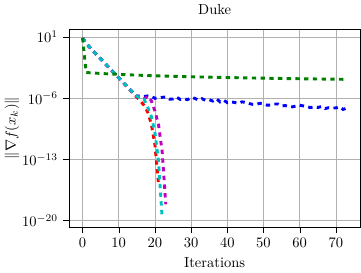}
        \caption{Gradient norm vs. iterations for the Duke dataset}
    \end{subfigure}
    \hfill
    \begin{subfigure}[b]{0.49\textwidth}
      \includegraphics{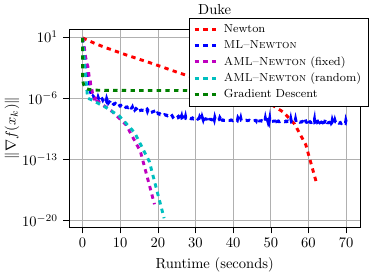}
        \caption{Gradient norm vs. runtime for the Duke dataset}
    \end{subfigure}

    \begin{subfigure}[b]{0.49\textwidth}
        \includegraphics{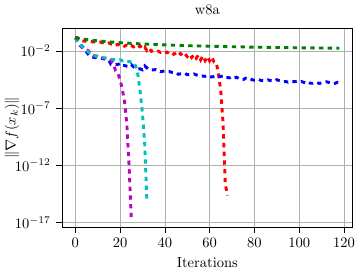}
        \caption{Gradient norm vs. iterations for the w8a dataset with $\mathcal{N}(0,1)$ initialization}
    \end{subfigure}
    \hfill
    \begin{subfigure}[b]{0.49\textwidth}
      \includegraphics{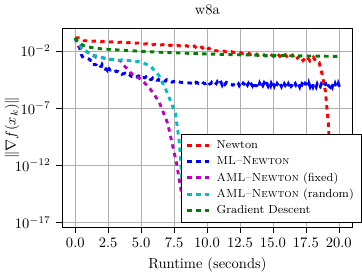}
        \caption{Gradient norm vs. runtime for the w8a dataset with  $\mathcal{N}(0,1)$ initialization}
    \end{subfigure}

    \caption{Comparisons between optimization algorithms on Logistic regression. Each row corresponds to a dataset; the left column shows the gradient norm vs. iterations plots, the right column shows the gradient norm vs. runtime in seconds. See \cref{tab:logistic regression comparison} for the number of fine and coarse steps computed by the multilevel methods.}
    \label{fig:logistic regression comparison}
\end{figure}

\begin{figure}[tbh]
    \centering

    \begin{subfigure}[b]{0.49\textwidth}
        \includegraphics{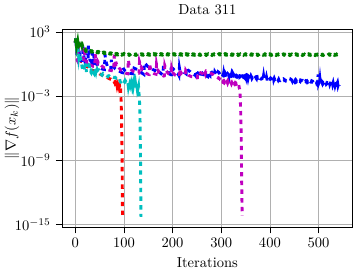}
        \caption{Gradient norm vs. iterations for the Data~311 dataset with Poisson regression and $\mathcal{N}(0,1)$ initialization}
    \end{subfigure}
    \hfill
    \begin{subfigure}[b]{0.49\textwidth}
       \includegraphics{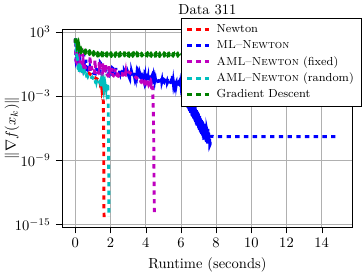}
        \caption{Gradient norm vs. runtime for the Data~311 dataset with Poisson regression and $\mathcal{N}(0,1)$ initialization}
    \end{subfigure}

    \begin{subfigure}[b]{0.49\textwidth}
        \includegraphics{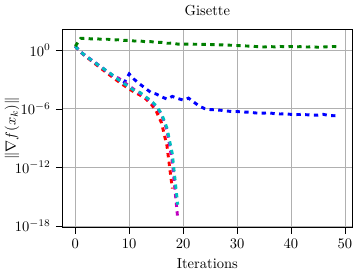}
        \caption{Gradient norm vs. iterations for the Gisette dataset with Logistic regression}
    \end{subfigure}
    \hfill
    \begin{subfigure}[b]{0.49\textwidth}
        \includegraphics{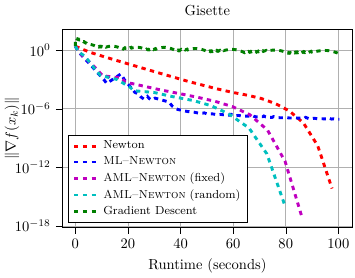}
        \caption{Gradient norm vs. runtime for the Gisette dataset with Logistic regression}
    \end{subfigure}

    \begin{subfigure}[b]{0.49\textwidth}
        \includegraphics{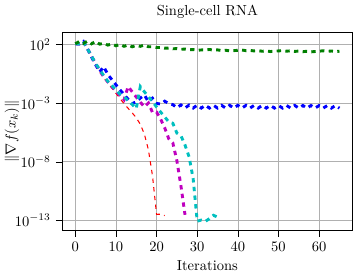}
                \caption{Gradient norm vs. iterations for the Sin\-gle-\-cell RNA dataset with Poisson \mbox{regression}}
    \end{subfigure}
    \hfill
    \begin{subfigure}[b]{0.49\textwidth}
        \includegraphics{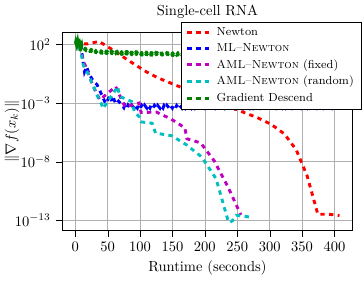}
        \caption{Gradient norm vs. runtime for the Single-\-cell RNA dataset with Poisson regression}
    \end{subfigure}

    \caption{Comparisons between optimization algorithms.  Each row corresponds to a dataset; the left column shows the gradient norm vs. iterations plots, the right column shows the gradient norm vs. runtime in seconds. See \cref{tab:logistic+poisson regression comparison} for the number of fine and coarse steps computed by the multilevel methods.}
    \label{fig:logistic+poisson regression comparison}
\end{figure}


\begin{figure}[tbh]
    \centering

    \begin{subfigure}[b]{0.49\textwidth}
       \includegraphics{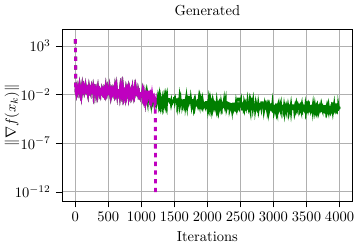}
        \caption{Gradient norm vs. iterations for $\varrho = 0.05$ with $\sigma = 0.01$}
        \label{subfig:small dims iter}
    \end{subfigure}
    \hfill
    \begin{subfigure}[b]{0.49\textwidth}
        \includegraphics{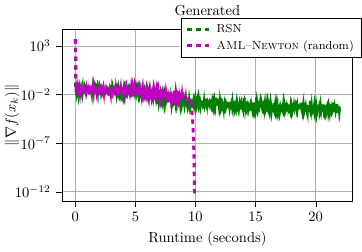}
        \caption{Gradient norm vs. runtime for $\varrho = 0.05$ with $\sigma = 0.01$}
        \label{subfig:small dims sec}
    \end{subfigure}

    \begin{subfigure}[b]{0.49\textwidth}
       \includegraphics{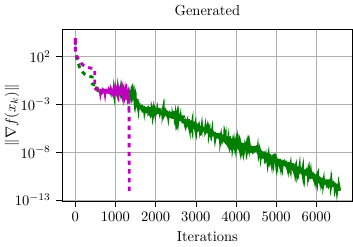}
        \caption{Gradient norm vs. iterations for $\varrho = 0.2$ with $\sigma = 0.1$}
        \label{subfig: subspace fast conv}
    \end{subfigure}
    \hfill
    \begin{subfigure}[b]{0.49\textwidth}
      \includegraphics{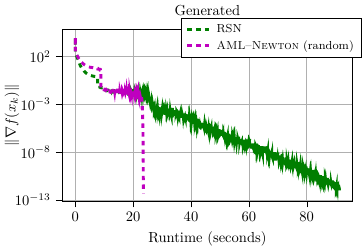}
        \caption{Gradient norm vs. runtime for $\varrho = 0.2$ with $\sigma = 0.1$}
    \end{subfigure}

    \begin{subfigure}[b]{0.49\textwidth}
       \includegraphics{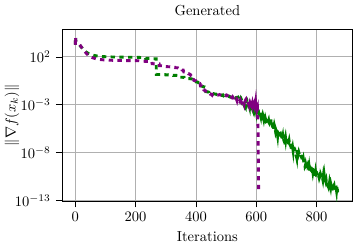}
        \caption{Gradient norm vs. iterations for $\varrho = 0.5$ with $\sigma = 0.1$}
    \end{subfigure}
    \hfill
    \begin{subfigure}[b]{0.49\textwidth}
        \includegraphics{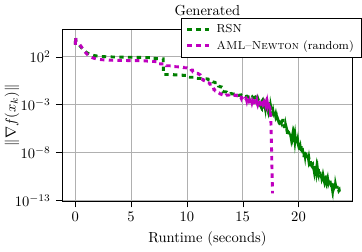}
        \caption{Gradient norm vs. runtime for $\varrho = 0.5$ with $\sigma = 0.1$}
    \end{subfigure}

    \caption{Comparison between \textsc{AML--Newton} (random) with $m=2$ and $n_1 = \varrho n$ and the RSN method with $n_{\rm low} = \varrho n$ ($\varrho \in (0,1]$) on an $\ell_1$-regularized Poisson  regression for the Generated data. Each row corresponds to the size of the coarse model; the left column shows the gradient norm vs. iterations, the right column the gradient norm vs. runtime in seconds. See \cref{tab:generated-steps} for the number of fine and coarse steps computed by
    the \textsc{AML--Newton} method.}
    \label{fig:generated subspace comparisons}
\end{figure}

In \cref{fig:generated_dataset_comparison} we illustrate the efficiency of \textsc{AML--Newton} for problems exhibiting a low-rank structure. By construction, the Hessians of the Poisson model using the Generated dataset are expected to be nearly low-rank with only the first ten eigenvalues being significant, while the remaining eigenvalues should all have a magnitude of $10^{-6}$ or less. This fact alone poses difficulties to optimization algorithms finding a trajectory towards the minimizer. In addition, to make the problem even harder to solve, we use $l_1 > 0$ or a poor initial point (see \cref{tab:combined_experiment}). Observe that in both cases the Newton method has a slow linear rate in the first phase and therefore enters its quadratic phase much later. The classical \textsc{ML--Newton} and Gradient Descent methods enjoy fast convergence in their first phase, respectively, however they slow down significantly as they approach the minimizer, therefore converging to sub-optimal solutions. Hence, neither Newton's method, the \textsc{ML--Newton} method, nor Gradient Descent appear suitable for efficiently solving such problems. On the other hand, the figure suggests that \textsc{AML--Newton} combines characteristics of both Newton's method and the \textsc{ML--Newton} method. Specifically, it exhibits very fast convergence in the initial phase and, due to its provable local quadratic convergence rate, it rapidly approaches the minimizer. Notably, the runtime of both \textsc{A\textsc{ML--Newton}} methods and the \textsc{ML--Newton} method are similar during the initial phase. This indicates that most of the trial steps for \textsc{A\textsc{ML--Newton}} are performed in the quadratic phase. The similar behavior in the early stages is also due to the shared condition \eqref{ineq: extra condition for experiments}. In the end of this section we will revisit this experiment to corroborate the behavior of \textsc{AML–Newton} methods relative to the RSN method.

Moreover, Figures~\ref{fig:logistic regression comparison} and~\ref{fig:logistic+poisson regression comparison} present a comparison between algorithms for logistic and Poisson regression on real-world datasets. These figures demonstrate that many practical problems exhibiting low-rank structures significantly slow down Newton's first phase convergence. In such cases, multilevel and Gradient Descent methods enjoy a much faster rate in their first phase compared to Newton's method. Thus, the \textsc{A\textsc{ML--Newton}} method enters its quadratic phase much earlier than Newton's method, which in turn leads to substantial reductions in runtime. Notably, this behavior is consistently observed for the logistic loss as shown in Figure~\ref{fig:logistic regression comparison}. As anticipated, Gradient Descent struggles to achieve high accuracy in most cases. On the other hand, the experiments in \cref{fig:logistic+poisson regression comparison} suggest that the Hessian matrix incorporates important second-order information that cannot be ignored during optimization. Interestingly, although Newton's first phase is sometimes faster, the results show that \textsc{A\textsc{ML--Newton}} can still reach the solution more quickly in terms of runtime due to its faster first phase. We observe that increasing the difficulty of the problems, for instance, by requiring a sparse solution or using a poor initialization, the results are even more favorable for \textsc{A\textsc{ML--Newton}} when compared to its counterparts.

In the final experiment, we revisit the regularized low-rank problem from \cref{fig:generated_dataset_comparison} to compare \textsc{A\textsc{ML--Newton}} (random) with the RSN method \cite{gower2019rsn, tsipinakis2024multilevel}. Prior work shows that subspace and multilevel methods are well suited to this problem class and can outperform state-of-the-art methods \cite{ho2019newton, tsipinakis2023lowrank}. With this comparison, our aim is to demonstrate that \textsc{A\textsc{ML--Newton}} further improves the performance of second-order methods. Like \textsc{A\textsc{ML--Newton}} (random), RSN will generate the random subspaces according to \cref{def: R_i} at each iteration. For \textsc{A\textsc{ML--Newton}} we set $m=2$ models, that is one coarse and one fine. This configuration makes \textsc{A\textsc{ML--Newton}} closely related to Randomized Subspace Newton, with the key difference that it dynamically chooses whether to compute directions in the subspace or in the full space. The result regarding the convergence behavior of the methods are presented in \cref{fig:generated subspace comparisons}, and \cref{tab:generated-steps} shows the number of coarse and fine steps computed by \textsc{A\textsc{ML--Newton}} during the optimization.

The results on this problem class clearly show that \textsc{A\textsc{ML--Newton}} substantially improves the convergence behavior of the RSN method, both in convergence rate and runtime, primarily due to its local quadratic convergence. In the early iterations the two methods behave similarly; thereafter, \textsc{A\textsc{ML--Newton}} enters its quadratic phase and reaches the minimizer in a few iterations, whereas the RSN method approaches it only linearly. The benefit of adaptively choosing the space is even more pronounced when the subspace dimension is small, a regime central to subspace and multilevel methods. In such cases these methods often progress very slowly near the minimizer and may converge to suboptimal points (see \cref{subfig:small dims iter} and \cref{subfig:small dims sec}). However, as the results indicate, \textsc{A\textsc{ML--Newton}} mitigates this issue. Moreover, \cref{tab:generated-steps} shows that \textsc{A\textsc{ML--Newton}} uses only a small number of fine steps, mostly late in the optimization, so the method effectively behaves like RSN initially and then automatically transitions to full Newton near the solution. 

\begin{table}[t]
\centering
\caption{Number of coarse and fine steps taken by \textsc{A\textsc{ML--Newton}} (random) with \(m=2\) models (one coarse and one fine) on the Generated dataset. Each row corresponds to a different coarse-model size. The results correspond to \cref{fig:generated subspace comparisons}.}
\label{tab:generated-steps}
\begin{tabular}{lcc}
\toprule
\textbf{Method} & \multicolumn{2}{c}{Generated} \\
& \# coarse & \# fine \\
\midrule
\textsc{A\textsc{ML--Newton}}, $n_1 = 0.05n, \sigma = 0.01$ & 1,215 & 19 \\
\textsc{A\textsc{ML--Newton}}, $n_1 = 0.2n, \sigma = 0.1$   & 1,502 & 24 \\
\textsc{A\textsc{ML--Newton}}, $n_1 = 0.5n, \sigma = 0.1$   & 585 & 10 \\
\bottomrule
\end{tabular}
\end{table}

Observe that with $m=2$, whenever \textsc{A\textsc{ML--Newton}} does not take a Newton step, it computes at most one coarse step per iteration (see \cref{tab:generated-steps})). For the case $n_1=0.2n$, the corresponding computational complexities can be estimated as $1{,}502\,N n_1^3 + 24\,Nn^3$ and $6{,}578\,Nn_1^3$ floating point operations for \textsc{A\textsc{ML--Newton}} and RSN, respectively, where $N >0$ is the complexity constant related to the solution of linear systems with the Hessians. Substituting $n_1=0.2n$ yields about
\begin{equation*}
    36\,Nn^3 \quad \text{and} \quad 53\,Nn^3
\end{equation*}
total floating point operations, 
respectively. These figures are even more favorable for \textsc{AML--Newton} when $n_1$ is smaller. Hence, the results of this paper show that a quadratically convergent second-order method can achieve a lower overall computational cost than the Subspace Newton method.

\section{Conclusions}

We developed a new multilevel Newton-type method and established its local quadratic convergence rate for strongly convex and self-concordant functions. The convergence rates are comparable to those of Newton's method and are straightforward to interpret. Experimental results demonstrated that these rates are achievable in practice and that our method requires significantly cheaper iterations than Newton's method. Consequently, the proposed method consistently outperforms Newton's method on problems where the latter struggles due to slow convergence in the early stages of the minimization process. These findings suggest that second-order methods can, in fact, outperform first-order methods even in the initial phases of minimizing strongly convex functions.

Several research questions arise from the results of this paper, which we plan to explore in future work and which we hope will attract interest from the optimization community:
\begin{enumerate}
\item The per-iteration cost depends on coarse-level design. Our simple construction makes \textsc{AML--Newton} practically efficient, matching \textsc{ML--Newton} and RSN in the first phase, while richer coarse models and a more sophisticated rule for selecting  $i^*$ (Step~\ref{step: find} in \cref{alg: multilevel}) are promising directions to further reduce runtime. 

\item For \textsc{RSN} we establish local quadratic convergence with high probability, but we have not yet observed this rate empirically. This may stem from the factor \(\big((1-\varepsilon)/(1+\varepsilon)\big)^5\) in \cref{cor: probabilistic rate strong convexity}, which can delay entry into the quadratic phase, and from standard random projections (Gaussian, 1-hashing, \(s\)-hashing) that may not be well-suited. Nonetheless, we believe that our theory provides a foundation for designing random matrices that realize the proven probabilistic quadratic rates.

\item A natural extension is a randomized-subspace \textsc{AML--Newton} that alternates between two subspaces (instead of subspace--full space): it starts in a very small subspace and automatically switches to a larger one once progress slows, with the larger subspace sized so subspace Newton retains fast linear convergence to high accuracy (\cref{subfig: subspace fast conv}). This is a direct extension of \cref{alg: multilevel} to a full subspace setting for large-scale optimization.

\item Trial steps are central to trust-region methods. We will extend our ideas to Trust-Region Newton to improve convergence in nonconvex problems, and we expect the same principles to enhance other second-order schemes, such as cubic-regularized (Cubic) Newton.

\end{enumerate}

\section*{Code and Data Availability}
Python code and data that was used to produce the numerical results of this paper are available at
\begin{center}
 \url{https://github.com/Ntsip/Adaptive-Multilevel-Newton-method}.
\end{center}

\section*{CRediT Author Statement}
\textbf{Nick Tsipinakis:} Conceptualization, Methodology, Software, Writing - Original Draft. \textbf{Panos Parpas:} Writing - Review \& Editing. \textbf{Matthias Voigt:} Writing - Review \& Editing, Supervision.


\bibliographystyle{siamplain}
\bibliography{references}

@article{gratton2008recursive,
  author    = {Gratton, Serge and Sartenaer, Annick and Toint, Philippe L.},
  title     = {Recursive Trust-Region Methods for Multiscale Nonlinear Optimization},
  journal   = {SIAM J. Optim.},
  volume    = {19},
  number    = {1},
  pages     = {414--444},
  year      = {2008},
  doi       = {10.1137/08071819X},
}

@article{nash2000multigrid,
  author    = {Nash, Stephen G.},
  title     = {A Multigrid Approach to Discretized Optimization Problems},
  journal   = {Optim. Methods Softw.},
  volume    = {14},
  number    = {1-2},
  pages     = {99--116},
  year      = {2000},
  doi       = {10.1080/10556780008805795},
  publisher = {Taylor \& Francis}
}

@article{ho2019newton,
  author    = {Ho, Chin Pang and Ko{\v{c}}vara, Michal and Parpas, Panos},
  title     = {Newton-Type Multilevel Optimization Method},
  journal   = {Optim. Methods Softw.},
  volume    = {34},
  pages     = {1--34},
  year      = {2019},
  doi       = {10.1080/10556788.2019.1670007},
  publisher = {Taylor \& Francis}
}

@article{MR2587737,
  author    = {Wen, Zaiwen and Goldfarb, Donald},
  title     = {A Line Search Multigrid Method for Large-Scale Nonlinear Optimization},
  journal   = {SIAM J. Optim.},
  volume    = {20},
  number    = {3},
  pages     = {1478--1503},
  year      = {2009},
  doi       = {10.1137/08071524X},
  publisher = {SIAM}
}

@article{tsipinakis2024multilevel,
  author    = {Tsipinakis, Nick and Parpas, Panos},
  title     = {A Multilevel Method for Self-Concordant Minimization},
  journal   = {J. Optim. Theory Appl.},
  volume    = {203},
  number    = {3},
  pages     = {2509--2559},
  year      = {2024},
  doi       = {10.1007/s10957-024-02378-7},
  publisher = {Springer}
}

@techreport{tsipinakis2023lowrank,
  author    = {Tsipinakis, Nick and Tigkas, Panagiotis and Parpas, Panos},
  title     = {A Multilevel Low-Rank {Newton} Method with Super-linear Convergence Rate and Its Application to Non-convex Problems},
  type      = {arXiv preprint},
  number    = {arXiv:2305.08742},
  year      = {2023},
  url       = {https://arxiv.org/abs/2305.08742}
}

@book{boyd2004convex,
  author    = {Boyd, Stephen P. and Vandenberghe, Lieven},
  title     = {Convex Optimization},
  publisher = {Cambridge University Press},
  address   = {Cambridge, UK},
  year      = {2004},
  isbn      = {9780521833783}
}

@book{nesterov2018lectures,
  author    = {Nesterov, Yurii},
  title     = {Lectures on Convex Optimization},
  series    = {SIAM Optim. Ser.},
  volume    = {137},
  edition   = {2},
  publisher = {SIAM},
  address   = {Philadelphia, PA},
  year      = {2018},
  doi       = {10.1007/978-3-319-91578-4}
}

@article{kane2014sparser,
  author    = {Kane, Daniel M. and Nelson, Jelani},
  title     = {Sparser {Johnson}--{Lindenstrauss} Transforms},
  journal   = {J. ACM},
  volume    = {61},
  number    = {1},
  pages     = {1--23},
  year      = {2014},
  doi       = {10.1145/2559902},
  publisher = {ACM}
}

@article{johnson1984extensions,
  author    = {Johnson, William B. and Lindenstrauss, Joram},
  title     = {Extensions of {Lipschitz} Mappings into a {Hilbert} Space},
  journal   = {Contemp. Math.},
  volume    = {26},
  pages     = {189--206},
  year      = {1984},
  publisher = {American Mathematical Society}
}

@techreport{cartis2022randomised,
  author    = {Cartis, Coralia and Fowkes, Jaroslav and Shao, Zhen},
  title     = {Randomised Subspace Methods for Non-convex Optimization, with Applications to Nonlinear Least-Squares},
  type      = {arXiv preprint},
  number    = {arXiv:2211.09873},
  year      = {2022},
  doi       = {10.48550/arXiv.2211.09873}
}

@book{conn2000trust,
  author    = {Conn, Andrew R. and Gould, Nicholas I. M. and Toint, Philippe L.},
  title     = {Trust Region Methods},
  publisher = {SIAM},
  address   = {Philadelphia, PA},
  year      = {2000},
  isbn      = {9780898714692},
  series    = {MOS-SIAM Ser. Optim.},
  volume    = {1},
}

@techreport{karimireddy2018global,
  author    = {Karimireddy, Sai Praneeth and Stich, Sebastian U. and Jaggi, Martin},
  title     = {Global Linear Convergence of {Newton's} Method without Strong-Convexity or {Lipschitz} Gradients},
  type      = {arXiv preprint},
  number    = {arXiv:1806.00413},
  year      = {2018},
  doi       = {10.48550/arXiv.1806.00413}
}

@book{horn2012matrix,
  author    = {Horn, Roger A. and Johnson, Charles R.},
  title     = {Matrix Analysis},
  edition   = {2},
  publisher = {Cambridge University Press},
  address   = {Cambridge, UK},
  year      = {2012},
  isbn      = {9780521548236},
  doi       = {10.1017/CBO9780511810817}
}

@article{bottou2018optimization,
  author    = {Bottou, L{\'e}on and Curtis, Frank E. and Nocedal, Jorge},
  title     = {Optimization Methods for Large-Scale Machine Learning},
  journal   = {SIAM Rev.},
  volume    = {60},
  number    = {2},
  pages     = {223--311},
  year      = {2018},
  doi       = {10.1137/16M1080173},
  publisher = {SIAM}
}

@article{nesterov2008accelerating,
  author    = {Nesterov, Yurii},
  title     = {Accelerating the Cubic Regularization of {Newton's} Method on Convex Problems},
  journal   = {Math. Program.},
  volume    = {112},
  number    = {1},
  pages     = {159--181},
  year      = {2008},
  doi       = {10.1007/s10107-007-0141-4},
  publisher = {Springer}
}

@inproceedings{hanzely2020stochastic,
  author    = {Hanzely, Filip and Doikov, Nikita and Nesterov, Yurii and Richt{\'a}rik, Peter},
  title     = {Stochastic Subspace Cubic {Newton} Method},
  booktitle = {Proceedings of the 37th International Conference on Machine Learning},
  series = {Proc. Mach. Learn. Res.},
  pages     = {4027--4038},
  year      = {2020},
  volume    = {119},
  publisher = {PMLR},
  url       = {https://proceedings.mlr.press/v119/hanzely20a.html},
}

@inproceedings{gower2019rsn,
  author    = {Gower, Robert and Kovalev, Dmitry and Lieder, Felix and Richt\'arik, Peter},
  title     = {{RSN: Randomized Subspace Newton}},
  booktitle = {Advances in Neural Information Processing Systems},
  editor    = {H. Wallach and H. Larochelle and A. Beygelzimer and F. d'Alch\'e-Buc and E. Fox and R. Garnett},
  volume    = {32},
  year      = {2019},
  publisher = {Curran Associates, Inc.},
  url       = {https://papers.nips.cc/paper_files/paper/2019/hash/bc6dc48b743dc5d013b1abaebd2faed2-Abstract.html}
}

@techreport{tsipinakis2025convergence,
  author    = {Tsipinakis, Nick and Parpas, Panos},
  title     = {Convergence Rates of {Newton's} Method for Strongly Self-Concordant Minimization},
  type      = {arXiv preprint},
  number    = {arXiv:2507.23558},
  year      = {2025},
  url       = {https://arxiv.org/pdf/2507.23558}
}

@article{wright2015coordinate,
  author    = {Wright, Stephen J.},
  title     = {Coordinate Descent Algorithms},
  journal   = {Math. Program.},
  volume    = {151},
  number    = {1},
  pages     = {3--34},
  year      = {2015},
  publisher = {Springer},
  doi       = {10.1007/s10107-015-0892-3}
}

@article{nesterov2012efficiency,
  author    = {Nesterov, Yurii},
  title     = {Efficiency of Coordinate Descent Methods on Huge-Scale Optimization Problems},
  journal   = {SIAM J. Optim.},
  volume    = {22},
  number    = {2},
  pages     = {341--362},
  year      = {2012},
  publisher = {SIAM},
  doi       = {10.1137/100802001}
}

@article{richtarik2014iteration,
  author    = {Richt{\'a}rik, Peter and Tak{\'a}{\v{c}}, Martin},
  title     = {Iteration Complexity of Randomized Block-Coordinate Descent Methods for Minimizing a Composite Function},
  journal   = {Math. Program.},
  volume    = {144},
  number    = {1},
  pages     = {1--38},
  year      = {2014},
  publisher = {Springer},
  doi       = {10.1007/s10107-013-0671-9}
}

@inproceedings{qu2016sdna,
  author    = {Qu, Zheng and Richt{\'a}rik, Peter and Tak{\'a}c, Martin and Fercoq, Olivier},
  title     = {{SDNA}: Stochastic Dual {Newton} Ascent for Empirical Risk Minimization},
  booktitle = {Proceedings of the 33rd International Conference on Machine Learning},
  series    = {Proc. Mach. Learn. Res.},
  volume    = {48},
  pages     = {1823--1832},
  year      = {2016},
  publisher = {PMLR},
  url       = {https://proceedings.mlr.press/v48/qu16.html}
}

@inproceedings{doikov2018randomized,
  author    = {Doikov, Nikita and Richt{\'a}rik, Peter},
  title     = {Randomized Block Cubic {Newton} Method},
  booktitle = {Proceedings of the 35th International Conference on Machine Learning},
  series    = {Proc. Mach. Learn. Res.},
  volume    = {80},
  pages     = {1290--1298},
  year      = {2018},
  publisher = {PMLR},
  url       = {https://proceedings.mlr.press/v80/doikov18a.html}
}

@techreport{cartis2025random,
  author    = {Cartis, Coralia and Shao, Zhen and Tansley, Edward},
  title     = {Random Subspace Cubic-Regularization Methods, with Applications to Low-Rank Functions},
  type      = {arXiv preprint},
  number    = {arXiv:2501.09734},
  year      = {2025},
  url       = {https://arxiv.org/abs/2501.09734}
}

@article{gross2009convergence,
  author    = {Gross, Christian and Krause, Rolf},
  title     = {On the Convergence of Recursive Trust-Region Methods for Multiscale Nonlinear Optimization and Applications to Nonlinear Mechanics},
  journal   = {SIAM J. Numer. Anal.},
  volume    = {47},
  number    = {4},
  pages     = {3044--3069},
  year      = {2009},
  publisher = {SIAM},
  doi       = {10.1137/08073888X}
}

@article{borzi2009multigrid,
  author    = {Borzi, Alfio and Schulz, Volker},
  title     = {Multigrid Methods for {PDE} Optimization},
  journal   = {SIAM Rev.},
  volume    = {51},
  number    = {2},
  pages     = {361--395},
  year      = {2009},
  publisher = {SIAM},
  doi       = {10.1137/070705153}
}

@article{ziems2011adaptive,
  author    = {Ziems, J. Carsten and Ulbrich, Stefan},
  title     = {Adaptive Multilevel Inexact {SQP} Methods for {PDE}-Constrained Optimization},
  journal   = {SIAM J. Optim.},
  volume    = {21},
  number    = {1},
  pages     = {1--40},
  year      = {2011},
  publisher = {SIAM},
  doi       = {10.1137/090775246}
}

@techreport{tsipinakis2023simba,
  author    = {Tsipinakis, Nick and Parpas, Panos},
  title     = {{SIMBA}: A Scalable Bilevel Preconditioned Gradient Method for Fast Evasion of Flat Areas and Saddle Points},
  type      = {arXiv preprint},
  number    = {arXiv:2309.05309},
  year      = {2023},
  url       = {https://arxiv.org/abs/2309.05309}
}

@article{kopanicakova2022globally,
  author    = {Kopanic{\'a}kov{\'a}, Alena and Krause, Rolf},
  title     = {Globally Convergent Multilevel Training of Deep Residual Networks},
  journal   = {SIAM J. Sci. Comput.},
  volume    = {45},
  number    = {3},
  pages     = {S254--S280},
  year      = {2023},
  publisher = {SIAM},
  doi       = {10.1137/22M1486817}
}

@techreport{von2021training,
  author    = {von Planta, Cyrill and Kopanic{\'a}kov{\'a}, Alena and Krause, Rolf},
  title     = {Training of Deep Residual Networks with Stochastic {MG/OPT}},
  type      = {arXiv preprint},
  number    = {arXiv:2108.04052},
  year      = {2021},
  url       = {https://arxiv.org/abs/2108.04052}
}

@article{gaedke2021multilevel,
  author    = {Gaedke-Merzh{\"a}user, Lisa and Kopanic{\'a}kov{\'a}, Alena and Krause, Rolf},
  title     = {Multilevel Minimization for Deep Residual Networks},
  journal   = {ESAIM Proc. Surveys},
  volume    = {71},
  pages     = {131--144},
  year      = {2021},
  publisher = {EDP Sciences},
  doi       = {10.1051/proc/202171131}
}

@article{gunther2020layer,
  author    = {Gunther, Stefanie and Ruthotto, Lars and Schroder, Jacob B. and Cyr, Eric C. and Gauger, Nicolas R.},
  title     = {Layer-Parallel Training of Deep Residual Neural Networks},
  journal   = {SIAM J. Math. Data Sci.},
  volume    = {2},
  number    = {1},
  pages     = {1--23},
  year      = {2020},
  publisher = {SIAM},
  doi       = {10.1137/19M1247620}
}

@techreport{moon2022parallel,
  author    = {Moon, Gordon Euhyun and Cyr, Eric C.},
  title     = {Parallel Training of {GRU} Networks with a Multi-Grid Solver for Long Sequences},
  type      = {arXiv preprint},
  number    = {arXiv:2203.04738},
  year      = {2022},
  url       = {https://arxiv.org/abs/2203.04738}
}

@inproceedings{wu2020multigrid,
  author    = {Wu, Chao-Yuan and Girshick, Ross and He, Kaiming and Feichtenhofer, Christoph and Krahenbuhl, Philipp},
  title     = {A Multigrid Method for Efficiently Training Video Models},
  booktitle = {2020 IEEE/CVF Conference on Computer Vision and Pattern Recognition (CVPR)},
  pages     = {150--159},
  address   = {Seattle, WA, USA},
  year      = {2020},
  publisher = {IEEE},
  doi       = {10.1109CVPR42600.2020.00023}
}

@article{cyr2025torchbraid,
  author    = {Cyr, Eric C. and Hahne, Jens and Moore, Nicholas S. and Schroder, Jacob B. and Southworth, Ben S. and Vargas, David A.},
  title     = {{TorchBraid}: High-Performance Layer-Parallel Training of Deep Neural Networks with {MPI} and {GPU} Acceleration},
  journal   = {ACM Trans. Math. Softw.},
  volume    = {51},
  number    = {3},
  pages     = {1--29},
  year      = {2025},
  publisher = {ACM},
  doi       = {10.1145/3502484}
}

@article{ponce2023multilevel,
  author    = {Ponce, Colin and Li, Ruipeng and Mao, Christina and Vassilevski, Panayot},
  title     = {Multilevel-in-Width Training for Deep Neural Network Regression},
  journal   = {Numer. Linear Algebra Appl.},
  volume    = {30},
  number    = {5},
  pages     = {e2501},
  year      = {2023},
  publisher = {Wiley},
  doi       = {10.1002/nla.2501}
}

@article{tsipinakis2025regularized,
  author    = {Tsipinakis, Nick and Parpas, Panos},
  title     = {Multilevel Regularized {Newton} Methods with Fast Convergence Rates},
  journal   = {SIAM J. Sci. Comput.},
  year      = {2025},
  publisher = {SIAM},
  doi       = {10.1137/24M1677307},
}

@article{gratton2024block,
  title={A block-coordinate approach of multi-level optimization with an application to physics-informed neural networks},
  author={Gratton, Serge and Mercier, Valentin and Riccietti, Elisa and Toint, Philippe L.},
  journal={Comput. Optim. Appl.},
  volume={89},
  number={2},
  pages={385--417},
  year={2024},
  doi = {10.1007/s10589-024-00597-1},
  publisher={Springer}
}

@techreport{calandra2019approximation,
  title={On the approximation of the solution of partial differential equations by artificial neural networks trained by a multilevel {L}evenberg-{M}arquardt method},
  author={Calandra, Henri and Gratton, Serge and Riccietti, Elisa and Vasseur, Xavier},
  type={arXiv preprint},
  doi = {10.48550/arXiv.1904.04685},
  number = {arXiv:1904.04685},
  year={2019}
}

@inproceedings{sypherd2020alpha,
  title={On the $\alpha$-loss landscape in the logistic model},
  author={Sypherd, Tyler and Diaz, Mario and Sankar, Lalitha and Dasarathy, Gautam},
  booktitle={2020 IEEE International Symposium on Information Theory (ISIT)},
  address = {Los Angeles, CA, USA},
  pages={2700--2705},
  year={2020},
  doi = {10.1109/ISIT44484.2020.9174356},
  organization={IEEE}
}
\end{document}